\theoremstyle{plain}
\newtheorem{theorem}{Theorem}
\newtheorem{lemma}[theorem]{Lemma}
\newtheorem{corollary}[theorem]{Corollary}
\theoremstyle{definition}
\newtheorem{example}{Example}
\newtheorem{remark}{Remark}
\title{\bfseries
    Pathwise approximations for the solution of the non-linear
    filtering problem$^\star$
    \footnotetext{
        $^\star$ Submitted for publication in
        \emph{
        Stochastic Analysis, Filtering, and Stochastic Optimization:
        A Commemorative Volume to Honor Mark H. A. Davis's Contributions
        },
        Springer Verlag.
    }\bigskip
}
\author{\centering
    \begin{minipage}[t]{0.3\linewidth}
    \begin{spacing}{0.7}
    {\small\bfseries Dan Crisan}\\
    {\footnotesize
    Imperial College London\\
    Department of Mathematics\\
    Huxley's Building\\
    180 Queen's Gate\\
    London SW7 2AZ, UK\\
   \href{mailto:d.crisan@imperial.ac.uk}{\texttt{d.crisan@imperial.ac.uk}}
    }
    \end{spacing}
    \end{minipage}
    \begin{minipage}[t]{0.3\linewidth}
    \begin{spacing}{0.7}
    {\small\bfseries Alexander Lobbe}\\
    {\footnotesize
        University of Oslo\\
        Department of Mathematics\\
        P.O.~Box 1053, Blindern\\
        0316 Oslo, Norway\\
        \href{mailto:alexalob@math.uio.no}{\texttt{alexalob@math.uio.no}}
    }
    \end{spacing}
    \end{minipage}
    \begin{minipage}[t]{0.3\linewidth}
    \begin{spacing}{0.7}
    {\small\bfseries Salvador Ortiz-Latorre}\\
    {\footnotesize
        University of Oslo\\
        Department of Mathematics\\
        P.O.~Box 1053, Blindern\\
        0316 Oslo, Norway\\
        \href{mailto:salvadoo@math.uio.no}{\texttt{salvadoo@math.uio.no}}
    }
    \end{spacing}
    \end{minipage}
}
\date{}
\begin{document}
\maketitle

\abstract{\normalsize
We consider high order approximations of the solution of the stochastic
filtering problem, derive their pathwise representation in the spirit of
the earlier work of Clark~\cite{clark1978design} and
Davis~\cite{davis1982pathwise, davis1987nonlinear} and prove their
robustness property.
In particular, we show that the high order discretised filtering
functionals can be represented by Lipschitz continuous functions
defined on the observation path space.
This property is important from the practical point of view as it is in fact
the pathwise version of the filtering functional that is sought in numerical
applications. Moreover, the pathwise viewpoint will be a stepping stone into the
rigorous development of machine learning methods for the filtering problem.
This work is a continuation of~\cite{crisan2019high} where a discretisation of
the solution of the filtering problem of arbitrary order has been established.
We expand the work in~\cite{crisan2019high} by showing that robust
approximations can be derived from the discretisations therein.}

\section{Introduction}
\label{sec:intro}
With the present article on non-linear filtering we wish to honor the work
of Mark H.~A.~Davis in particular to commemorate our great colleague.
The topic of filtering is an area that has
seen many excellent contributions by Mark. It is remarkable that he was able
to advance the understanding of non-linear filtering from a variety of angles.
He considered many aspects of the field in his work, spanning the full
range from the theory of the filtering equations to the numerical solution of
the filtering problem via Monte-Carlo methods.

Mark Davis' work on filtering can be traced back to his doctoral thesis where
he treats stochastic control of partially observable processes.
The first article specifically on the topic of filtering
that was co-authored by Mark appeared back in
1975 and considered a filtering problem with discontinuous observation
process~\cite{davis1975nonlinear}. There, they used the so-called innovations
method to compute the evolution of the conditional density of a process that is
used to modulate the rate of a counting process. This method is nowadays
well-known and is a standard way also to compute the linear (Kalman) filter
explicitly.
Early on in his career, Mark also contributed to the dissemination of filtering
in the mathematics community with his monograph
\emph{Linear Estimation and Stochastic Control}~\cite{davis1977linear},
published in 1977, which deals with filtering to a significant degree.
Moreover, his paper
\emph{An Introduction to Nonlinear Filtering}~\cite{davis1981introduction},
written together with S.~I. Marcus in 1981, has gained the status of a standard
reference in the field.

Importantly, and in connection to the theme of the present paper, Mark has
worked on computation and the robust filter already in
1980~\cite{davis1980computational}.
Directly after the conception of the robust filter by Clark in
1978~\cite{clark1978design}, Mark took up the role of a
driving figure in the subsequent development of robust, also known as pathwise,
filtering theory~\cite{davis1982pathwise, davis1987nonlinear}.
Here, he was instrumental in the development of the pathwise solution to the
filtering equations with one-dimensional observation processes.
Additionally, also correlated noise was already analysed in this work.

Robust filtering remains a highly relevant and challenging problem today.
Some more recent work on this topic includes the article~\cite{crisan2013robust}
which can be seen as an extension of the work by Mark, where correlated noise
and a multidimensional observation process are considered.
The work~\cite{crisan2005version} is also worth mentioning in this context,
as it establishes the validity of the robust filter rigorously.

Non-linear filtering is an important area within stochastic analysis and has
numerous applications in a variety of different fields.
For example, numerical weather prediction requires the solution of a high
dimensional, non-linear filtering problem.
Therefore, accurate and fast numerical algorithms for the approximate solution
of the filtering problem are essential.
In this contribution we analyse a recently developed high order time
discretisation of the solution of the filtering problem from the
literature~\cite{crisan2019high} and prove that the so discretised solution
possesses a property known as \emph{robustness}. Thus, the present paper is a
continuation of the previous work~\cite{crisan2019high} by two of the authors
which gives a new high-order time discretisation for the filtering functional.
We extend this result to produce the robust version, of any order, of the
discretisation from~\cite{crisan2019high}.
The implementation of the resulting numerical method remains open and is subject
of future research.
In subsequent work, the authors plan to deal with suitable extensions, notably a
machine learning approach to pathwise filtering.

Robustness is a property that is especially important for the numerical
approximation of the filtering problem in continuous time, since numerical
observations can only be made in a discrete way. Here, the robustness property
ensures that despite the discrete approximation, the solution obtained from it
will still be a reasonable approximation of the true, continuous filter.

The present paper is organised as follows: In Section~\ref{sec:prelim} we
discuss the established theory leading up to the contribution of this paper.
We introduce the stochastic filtering problem in sufficient generality in
Subsection~\ref{sec:filtering} whereafter the high order discretisation from the
recent paper~\cite{crisan2019high} is presented in
Subsection~\ref{sec:discretisation} together with all the necessary notations.
The Subsection~\ref{sec:discretisation} is concluded with the
Theorem~\ref{thm: Main Filtering_2}, taken from \cite{crisan2019high},
which shows the validity of the high order discretisation and is the
starting point for our contribution.
Then, Section~\ref{sec:robustness} serves to concisely present the main result
of this work, which is Theorem~\ref{thm:robust} below. Our Theorem is a
general result applying to corresponding discretisations of arbitrary order and
shows that all of these discretisations do indeed assume a robust version.
In Section~\ref{sec:proof} we present the proof of the main result in detail.
The argument proceeds along the following lines.
First, we establish the robust version of the discretisations for any order by
means of a \emph{formal} application of the integration by parts formula.
In Lemma~\ref{lem:Z_bound} we then show that the new robust approximation is
locally bounded over the set of observation paths.
Thereafter, Lemma~\ref{lem:G_lipschitz} shows that the robustly discretised
filtering functionals are locally Lipschitz continuous over the set of
observation paths. Based on the elementary but important auxilliary
Lemma~\ref{lem:H13} we use the path properties of the typical observation
in Lemma~\ref{lem:int_version} to get a version of the stochastic integral
appearing in the robust approximation which
is product measurable on the Borel sigma-algebra of the path space and the
chosen filtration.
Finally, after simplifying the arguments by lifting some of the
random variables to an auxilliary copy of the probability space, we can show in
Lemma~\ref{lem:null_set} that, up to a null-set, the lifted stochastic integral
appearing in the robust approximation is a random variable on the correct space.
And subsequently, in Lemma~\ref{lem:integral_representation} that the pathwise
integral almost surely coincides with the standard stochastic integral of the
observation process. The argument is concluded with Theorem~\ref{thm:final}
where we show that the robustly discretised filtering functional is a version of
the high-order discretisation of the filtering functional as derived in the
recent paper~\cite{crisan2019high}.

Our result in Theorem~\ref{thm:robust} can be interpreted as a remedy for some
of the shortcomings of the earlier work~\cite{crisan2019high} where the
discretisation of the filter is viewed as a random variable and the dependence
on the observation path is not made explicit.
Here, we are correcting this in the sense that we give an interpretation of said
random variable as a continuous function on path space.
Our approach has two main advantages. Firstly, from a
practitioner's point of view, it is exactly the path dependent version of the
discretised solution that we are computing in numerical applications. Thus it is
natural to consider it explicitly.
The second advantage lies in the fact that here we are building a foundation for
the theoretical development of machine learning approaches to the filtering
problem which rely on the simulation of observation paths.
With Theorem~\ref{thm:robust} we offer a first theoretical justification for
this approach.

\section{Preliminaries}
\label{sec:prelim}
Here, we begin by introducing the theory leading up to the main part of the
paper which is presented in Sections~\ref{sec:robustness} and \ref{sec:proof}.
\subsection{The filtering problem}
\label{sec:filtering}
Let $\left(\Omega,\mathcal{F},P\right)$ be a probability space
with a complete and right-continuous filtration $(\mathcal{F}_{t})_{t\geq0}$.
We consider a $d_{X}\times d_{Y}$-dimensional partially observed system
$\left(X,Y\right)$ satisfying the system of stochastic integral equations
\begin{equation}
\label{eq:system}
\left\{
    \begin{aligned}
    X_{t} & =X_{0} + \int_{0}^{t} f\left(X_{s}\right) ds
                   + \int_{0}^{t} \sigma\left(X_{s}\right) dV_{s},\\
    Y_{t} & =        \int_{0}^{t} h\left(X_{s}\right)ds
                   + W_{t},
    \end{aligned}
    \right.
\end{equation}
where $V$ and $W$ are independent $(\mathcal{F}_{t})_{t\geq0}$-adapted $d_{V}$-
and $d_{Y}$-dimensional standard Brownian motions, respectively.
Further, $X_{0}$ is a random variable, independent of $V$ and $W$,
with distribution denoted by $\pi_{0}$.
We assume that the coefficients
\begin{equation*}
    f=\left(f_{i}\right)_{i=1,\ldots,d_{X}}:
        \mathrm{R}^{d_{X}}\rightarrow\mathrm{R}^{d_{X}}
\text{ and }
    \sigma=\left(\sigma_{i,j}\right)_{i=1,\ldots,d_{X},j=1,\ldots,d_{V}}:
        \mathrm{R}^{d_{X}}\rightarrow\mathrm{R}^{d_{X}\times d_{V}}
\end{equation*}
of the \emph{signal process} $X$ are globally Lipschitz continuous
and that the \emph{sensor function}
\begin{equation*}
    h=\left(h_{i}\right)_{i=1,\ldots,d_{Y}}:
        \mathrm{R}^{d_{X}}\rightarrow\mathrm{R}^{d_{Y}}
\end{equation*}
is Borel-measurable and has linear growth. These conditions ensure that strong
solutions to the system \eqref{eq:system} exist and are almost surely unique.
A central object in filtering theory is the \emph{observation filtration}
$\left\{ \mathcal{Y}_{t}\right\}{\!}_{t\geq0}$ that is defined as the
augmentation of the filtration generated by the \emph{observation process} $Y$,
so that
$\mathcal{Y}_{t}=\sigma\left(Y_{s},s\in\left[0,t\right]\right)\vee\mathcal{N}$,
where $\mathcal{N}$ are all $P$-null sets of $\mathcal{F}$.

In this context, non-linear filtering means that we are interested in
determining, for all $t>0$, the conditional law, called \emph{filter} and
denoted by $\pi_{t}$, of the signal $X$ at time $t$ given the information
accumulated from observing $Y$ on the interval $\left[0,t\right]$.
Furthermore, this is equivalent to knowing for every bounded and Borel
measurable function $\varphi$ and every $t>0$, the value of
\begin{equation*}
    \pi_t(\varphi) =
    \mathrm{E}\big[\varphi(X_t) \bigm\vert \mathcal{Y}_{t} \big].
\end{equation*}

A common approach to the non-linear filtering problem introduced above is via a
change of probability measure.
This approach is explained in detail in the
monograph~\cite{bain2009fundamentals}.
In summary, a probability measure $\tilde{P}$ is constructed that is absolutely
continuous with respect to $P$ and such that $Y$ becomes a $\tilde{P}$-Brownian
motion independent of $X$. Additionally, the law of $X$ remains unchanged under
$\tilde{P}$.
The Radon-Nikodym derivative of $\tilde{P}$ with respect to $P$ is further given
by the process $Z$ that is given, for all $t\geq 0$, by
\begin{equation*}
    Z_{t}=\exp\left(
       \sum_{i=1}^{d_{Y}} \int_{0}^{t} h_{i}\left(X_{s}\right) dY_{s}^{i}
     - \frac{1}{2}\sum_{i=1}^{d_{Y}} \int_{0}^{t} h_{i}^{2}\left(X_{s}\right) ds
        \right).
\end{equation*}
Note that $Z$ is an $(\mathcal{F}_{t})_{t\geq0}$-adapted martingale under
$\tilde{P}$.
This process is used in the definition of another, measure-valued process $\rho$
that is given, for all bounded and Borel measurable functions $\varphi$ and all
$t\geq 0$, by
\begin{equation}
\label{eq:rho}
    \rho_t(\varphi) =
    \tilde{\mathrm{E}}\big[ \varphi(X_t)Z_t \bigm\vert \mathcal{Y}_t \big],
\end{equation}
where we denote by $\tilde{\mathrm{E}}$ the expectation with respect to
$\tilde{P}$.
We call $\rho$ the \emph{unnormalised filter}, because it is related to the
probability measure-valued process $\pi$ through the Kallianpur-Striebel formula
establishing that for all bounded Borel measurable functions $\varphi$ and all
$t\geq 0$ we have $P$-almost surely that
\begin{align}
\label{eq:KS}
    \pi_{t}(\varphi)=
    \dfrac{\rho_{t}(\varphi)}{\rho_{t}(\boldsymbol{1})}
    & =\dfrac{\mathrm{\tilde{E}}\left[\varphi(X_{t})Z_t
    \vert \mathcal{Y}_{t}\right]}
    {\mathrm{\tilde{E}}\left[ Z_t \vert \mathcal{Y}_{t}\right]}
\end{align}
where $\boldsymbol{1}$ is the constant function. Hence, the denominator
$\rho_{t}(\boldsymbol{1})$ can be viewed as the normalising factor for $\pi_t$.

\subsection{High order time discretisation of the filter}
\label{sec:discretisation}
As shown by the Kallianpur-Striebel formula \eqref{eq:KS}, $\pi_t(\varphi)$ is
a ratio of two conditional expectations.
In the recent paper~\cite{crisan2019high} a high order time discretisation of
these conditional expectations was introduced which leads further to a high
order time discretisation of $\pi_t(\varphi)$.
The idea behind this discretisation is summarised as follows.

First, for the sake of compactness, we augment the observation process as
$\hat{Y}_t = (\hat{Y}^i_t)_{i=0}^{d_Y} = (t, Y^1_t, \ldots, Y^{d_Y}_t)$ for all
$t\geq 0$ and write
\begin{equation*}
    \hat{h} =
    \biggl(-\frac{1}{2}\sum_{i=1}^{d_Y} h_i^2, h_1, \ldots, h_{d_Y}\biggr).
\end{equation*}
Then, consider the \emph{log-likelihood} process
\begin{equation}
\label{eq:log-likelihood}
    \xi_t = \log(Z_t)
          = \sum_{i=0}^{d_{Y}}
          \int_{0}^{t} \hat{h}_{i}(X_{s})
          \,\mathrm{d}\hat{Y}_{s}^{i},
          \qquad t\geq0.
\end{equation}
Now, given a positive integer $m$, the order $m$ time discretisation is achieved
by a stochastic Taylor expansion up to order $m$ of the processes
$\big(\hat{h}_i(X_t)\big){}_{t\geq 0}$, $i = 0, \ldots, d_Y$ in
\eqref{eq:log-likelihood}.
Finally, we substitute the discretised log-likelihood back into the original
relationships \eqref{eq:rho} and the Kallianpur-Striebel
formula \eqref{eq:KS} to obtain a discretisation of the filtering functionals.
However, it is important to note that for the orders $m > 2$
an additional truncation procedure is needed, which we will make precise
shortly, after introducing the necessary notation for the
stochastic Taylor expansion.

\subsubsection{Stochastic Taylor expansions}
\label{sec:s_taylor}
Let
$\mathcal{M}= \bigl\{\alpha\in\{ 0,\ldots,d_{V}\}^l : l=0,1,\ldots \bigr\}$
be the set of all multi-indices with range $\{ 0,\ldots,d_{V}\}$,
where $\emptyset$ denotes the multi-index of length zero.
For
$\alpha=(\alpha_{1},...,\alpha_{k})\in \mathcal{M}$ we adopt the notation
$\lvert \alpha \rvert = k$ for its length,
$\lvert \alpha\rvert _{0} =\# \lbrace j : \alpha_{j}=0 \rbrace$ for the number
of zeros in $\alpha$, and
$\alpha- =(\alpha_{1},...,\alpha_{k-1})$ and
$-\alpha =(\alpha_{2},...,\alpha_{k})$, for the right and left truncations,
respectively.
By convention $\left|\emptyset\right|=0$ and $-\emptyset=\emptyset-=\emptyset$.
Given two multi-indices
$\alpha,\beta\in\mathcal{M}$
we denote their concatenation by $\alpha\ast\beta$.
For positive and non-zero integers $n$ and $m$,
we will also consider the subsets of multi-indices
\begin{align*}
    \mathcal{M}_{n,m} &
        =\left\{ \alpha\in\mathcal{M}
        :n\leq\left\vert \alpha\right\vert
        \leq m\right\} ,\text{ and}\\
    \mathcal{M}_{m} &
        =\mathcal{M}_{m,m}=
        \left\{ \alpha\in\mathcal{M}:
        \left\vert \alpha\right\vert =m\right\}.
\end{align*}

For brevity, and by slight abuse of notation, we augment the
Brownian motion $V$ and now write
$V=\left(V^{i}\right){}_{i=0}^{d_{V}} = (t,V^{1}_t,\ldots,V^{d_V}_t)$
for all $t\geq 0$.
We will consider the filtration $\{\mathcal{F}_{t}^{0,V}\}{}_{t\geq0}$
defined to be the usual augmentation of the filtration generated by the process
$V$ and initially enlarged with the random variable $X_{0}$.
Moreover, for fixed $t\geq 0$, we will also consider the filtration
$\{\mathcal{H}_{s}^{t}=\mathcal{F}_{s}^{0,V}\lor\mathcal{Y}_{t}\}{}_{s\leq t}$.
For all $\alpha\in\mathcal{M}$ and all suitably integrable
$\mathcal{H}_{s}^{t}$-adapted processes
$\gamma=\left\{ \gamma_{s}\right\}{\!}_{s\leq t}$ denote by
$I_{\alpha}\left(\gamma_{\cdot}\right){}_{s,t}$ the Itô iterated
integral given for all $s\leq t$ by
\begin{equation*}
    I_{\alpha}(\gamma_{\cdot}){}_{s,t}=
    \begin{cases}
        \displaystyle \gamma_{t}, & \text{if} \left|\alpha\right|=0\\
        \displaystyle\int_{s}^{t}I_{\alpha-}(\gamma_{\cdot}){}_{s,u}
        \,dV_{u}^{\alpha_{|\alpha|}},
            & \text{if}
             \left\vert \alpha\right\vert \geq1.
    \end{cases}
\end{equation*}
Based on the coefficient functions of the signal $X$, we introduce
the differential operators $L^{0}$ and $L^{r}$, $r=1,...,d_{V}$, defined for all
twice continuously differentiable functions
$g:\mathrm{R}^{d_{X}}\rightarrow\mathrm{R}$ by
\begin{align*}
    L^{0}g & =\sum_{k=1}^{d_{X}}f_{k}
        \frac{\partial g}{\partial x^{k}}+
        \frac{1}{2}\sum_{k,l=1}^{d_{X}}
        \sum_{r=1}^{d_{V}}\sigma_{k,r}
        \sigma_{l,r}
        \frac{\partial^{2}g}{\partial x^{k}\partial x^{l}} \;\text{ and}\\
    L^{r}g & =\sum_{k=1}^{d_{X}}
        \sigma_{k,r}
        \frac{\partial g}{\partial x^{k}},\quad r=1,...,d_{V}.
\end{align*}
Lastly, for $\alpha=(\alpha_{1},...,\alpha_{k})\in\mathcal{M}$,
the differential operator $L^{\alpha}$ is defined to be the composition
$L^{\alpha} = L^{\alpha_{1}}\circ \cdots\circ L^{\alpha_{k}}$,
where, by convention, $L^{\emptyset}g=g$.

\subsubsection{Discretisation of the log-likelihood process}
\label{sec:disc_log_likelihood}
With the stochastic Taylor expansion at hand, we can now describe the
discretisation of the log-likelihood in \eqref{eq:log-likelihood}.
To this end, let for all $t>0$,
\begin{equation*}
    \Pi(t) =\big\lbrace \lbrace t_0,\ldots,t_n  \rbrace \subset [0,t]^{n+1}:
    0=t_{0}< t_1 <\cdots<t_{n}=t,\; n = 1, 2, \ldots \big\rbrace
\end{equation*}
be the set of all partitions of the interval $\left[0,t\right]$.
For a given partition we call the quantity
$\delta=\max\{t_{j+1}-t_j : j=0,\ldots,n-1\}$ the
\emph{meshsize} of $\tau$.
Then we discretise the log-likelihood as follows.
For all $t>0$, $\tau\in\Pi(t)$ and all positive integers $m$ we consider
\begin{align*}
    \xi_{t}^{\tau,m} & =
        \sum_{j=0}^{n-1}\xi_{t}^{\tau,m}(j)=
        \sum_{j=0}^{n-1}\sum_{i=0}^{d_{Y}}
        \sum_{\alpha\in\mathcal{M}_{0,m-1}}
            L^{\alpha}\hat{h}_{i}(X_{t_{j}})
            \int_{t_{j}}^{t_{j+1}}I_{\alpha}(\mathbf{1})
            {}_{t_{j},s}d\hat{Y}_{s}^{i}\\
    &=
    \sum_{j=0}^{n-1}
    \bigl\lbrace
    \kappa_j^{0,m}
    +
    \int_{t_{j}}^{t_{j+1}}
        \big\langle \eta_j^{0,m}(s), \mathrm{d}{Y}_s \big\rangle
        \bigr\rbrace,
\end{align*}
where we define for all integers $l\leq m-1$ and $j=0,\ldots,n-1$ the quantities
\begin{align*}
    \kappa^{l,m} &=
    \sum_{j=0}^{n-1}
    \kappa_j^{l,m} =
    \sum_{j=0}^{n-1}
    \Big\lbrace
    -\frac{1}{2}
    \sum_{\alpha\in\mathcal{M}_{l,m-1}}
    L^{\alpha}\langle h(\cdot), h(\cdot) \rangle(X_{t_j})
    \int_{t_j}^{t_{j+1}} I_{\alpha} (\mathbf{1})_{t_{j},s}
    \,\mathrm{d}s
    \Big\rbrace\\
    \eta_j^{l,m}(s) &=
        \biggl(
        \sum_{\alpha\in\mathcal{M}_{l,m-1}}
        L^{\alpha}h_{i} (X_{t_{j}} )
        I_{\alpha} (\mathbf{1})_{t_{j},s}
        \biggr)_{i=1,\ldots,d_Y}.\\
\end{align*}
and $\langle \cdot, \cdot \rangle$ denotes the euclidean inner product.
Note that by setting, in the case of $m>2$,
\begin{align*}
    \mu^{\tau,m}\left(j\right)&=\sum_{i=0}^{d_{Y}}
    \sum_{\alpha\in\mathcal{M}_{2,m-1}}
    L^{\alpha}\hat{h}_{i}(X_{t_{j}})\int_{t_{j}}^{t_{j+1}}I_{\alpha}
    (\mathbf{1})_{t_{j},s}d\hat{Y}_{s}^{i} \\
    &=
    \kappa_j^{2,m}
    +
    \int_{t_{j}}^{t_{j+1}}
    \big\langle\eta_{j}^{2,m}(s), \mathrm{d}Y_s \big\rangle,
\end{align*}
we may write the above as
\begin{equation*}
    \xi_{t}^{\tau,m}=
        \xi_{t}^{\tau,2}+\sum_{j=0}^{n-1}\mu^{\tau,m}\left(j\right).
\end{equation*}

As outlined before, the discretisations $\xi^{\tau,m}$ are obtained by replacing
the processes $\big(\hat{h}_i(X_t)\big){}_{t\geq 0}$, $i = 0, \ldots, d_Y$ in
\eqref{eq:log-likelihood} with the truncation of degree
$m-1$ of the corresponding stochastic Taylor expansion
of $\hat{h}_{i}\left(X_{t}\right)$. These discretisations are subsequently used
to obtain discretisation schemes of first and second order for the filter
$\pi_{t}(\varphi)$.
However, they cannot be used directly to produce discretisation schemes
of any order $m>2$ because they do not have finite exponential moments
(required to define the discretisation schemes). More precisely, the quantities
$\mu^{\tau,m}\left(j\right)$ do not have finite exponential moments because of
the high order iterated integral involved.
For this, we need to introduce a truncation of $\mu^{\tau,m}\left(j\right)$
resulting in a (partial) taming procedure to the stochastic Taylor expansion of
$\big(\hat{h}_i(X_t)\big){}_{t\geq 0}$.
To achieve this, we introduce for every positive integer $q$ and all $\delta>0$
the truncation functions
$\Gamma_{q, \delta}\colon \mathrm{R}\to\mathrm{R}$ such that
\begin{equation}
\label{eq:trunc_fct}
    \Gamma_{q,\delta}\left(z\right)=\frac{z}{1+\left(z/\delta\right)^{2q}}
\end{equation}
and set, for all $j=0,...,n-1$,
\begin{equation*}
    \bar{\xi}_{t}^{\tau,m}\left(j\right)=
    \begin{cases}
        \displaystyle \xi_{t}^{\tau,m}\left(j\right), & \mathrm{if}\ \ m=1,2\\
        \displaystyle \xi_{t}^{\tau,2}\left(j\right)+\Gamma_{m,(t_{j+1}-t_j)}
            \left(\mu^{\tau,m}\left(j\right)\right), & \mathrm{if}\ \ m>2
    \end{cases}\;.
\end{equation*}
Utilising the above, the truncated discretisations of the log-likelihood finally
read
\begin{equation}
\label{eq:trunc_xi}
    \bar{\xi}_{t}^{\tau,m}=\sum_{j=0}^{n-1}\bar{\xi}_{t}^{\tau,m}\left(j\right).
\end{equation}
We end this section with a remark about the properties of the truncation
function before we go on to discretising the filter.

\begin{remark}
\label{rmk:truncation}
The following two properties of the truncation function $\Gamma$, defined in
\eqref{eq:trunc_fct}, are readily checked.
For all positive integers $q$ and all $\delta>0$ we have that
\begin{enumerate}[label=\roman*)]
    \item
    the truncation function is bounded, specifically, for all $z\in\mathrm{R}$,
    \begin{equation*}
        \big\lvert \Gamma_{q,\delta}(z) \big\rvert
        \leq \frac{\delta}{(2q-1)^{1/2q}},
    \end{equation*}
    \item and that its derivative is bounded for all $z\in\mathrm{R}$ as
    \begin{equation*}
        \frac{q(1-q)-1}{2q}
        \leq
        \frac{\mathrm{d}}{\mathrm{d}z}\Gamma_{q,\delta}(z)
        \leq 1.
    \end{equation*}
    In particular, the truncation function is Lipschitz continuous.
\end{enumerate}
\end{remark}

\subsubsection{Discretisation of the filter}
\label{sec:disc_filt}
Since $\bar{\xi}_{t}^{\tau,m}$ in \eqref{eq:trunc_xi} is a discretisation of the
log-likelihood we will now consider,
for all $t>0$, $\tau\in\Pi\left(t\right)$ and all positive integers $m$, the
discretised likelihood
\begin{equation*}
    Z_{t}^{\tau,m} =\exp\left(\bar{\xi}_{t}^{\tau,m}\right).
\end{equation*}
The filter is now discretised, under the condition that the Borel measurable
function $\varphi$ satisfies
$\tilde{\mathrm{E}}\big[\lvert \varphi(X_{t})Z_{t}^{\tau,m} \rvert\big]
< \infty$,
to the $m$-th order by
\begin{equation*}
    \rho_{t}^{\tau,m}\left(\varphi\right)  =
    \mathrm{\tilde{E}}
    \left[
    \varphi(X_{t})Z_{t}^{\tau,m} \bigm\vert \mathcal{Y}_{t}\right]
\end{equation*}
and
\begin{equation}
\label{eq:disc_filter}
    \pi_{t}^{\tau,m}(\varphi)=\frac
    {\rho_{t}^{\tau,m}(\varphi)}
    {\rho_{t}^{\tau,m}(\boldsymbol{1})}.
\end{equation}
It remains to show that the achieved discretisation is indeed of order $m$.

\subsubsection{Order of approximation for the filtering functionals}
\label{sec:order}
In the framework developed thus far, we can state the main result
of~\cite{crisan2019high} which justifies the construction and proves the high
order approximation. To this end, we consider the $L^p$-norms
$\lVert\cdot\rVert_{L^p} = \tilde{\mathrm{E}}[\lvert\cdot\rvert^p]^{1/p}$,
$p\geq 1$.

\begin{theorem}[Theorem 2.3 in~\cite{crisan2019high}]
\label{thm: Main Filtering_2}
\noindent
    Let $m$ be a positive integer, let $t>0$, let $\varphi$ be an $(m+1)$-times
    continuously differentiable function with at most polynomial growth and
    assume further that the coefficients of the partially observed system
    $(X,Y)$ in \eqref{eq:system} satisfy that
    \begin{itemize}[label=$\circ$]
        \item $f$ is bounded and
            $\max\{2, 2m-1\}$-times continuously differentiable with bounded
            derivatives,
        \item $\sigma$ is bounded and $2m$-times continuously differentiable
        with bounded derivatives,
        \item $h$ is bounded and $(2m+1)$-times continuously differentiable
        with bounded derivatives,
            and that
        \item $X_{0}$ has moments of all orders.
    \end{itemize}
    Then there exist positive constants $\delta_{0}$ and $C$, such that for all
    partitions $\tau\in\Pi(t)$ with meshsize $\delta < \delta_0$
    we have that
    \begin{equation*}
        \bigl\lVert
        \rho_{t}(\varphi) - \rho_{t}^{\tau,m}(\varphi)
        \bigr\rVert_{L^2}
        \leq
        C\delta^{m}.
    \end{equation*}
Moreover, there
exist positive constants $\bar{\delta_{0}}$ and $\bar{C}$, such that for all
partitions $\tau\in\Pi(t)$ with meshsize $\delta < \bar{\delta_{0}}$,
\begin{equation*}
    {\mathrm{E}}\Bigl[
    \big\lvert
    \pi_{t}(\varphi)-\pi_{t}^{\tau,m}(\varphi)
    \big\rvert
    \Bigr]
    \leq\bar{C}\delta^{m}.
\end{equation*}
\end{theorem}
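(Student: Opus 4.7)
\medskip

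\textbf{Proof proposal.}
The plan is to first reduce the filter estimate to an unnormalised filter estimate via the Kallianpur--Striebel formula \eqref{eq:KS}, and then to reduce the unnormalised estimate to an $L^p$ bound on $Z_t - Z_t^{\tau,m}$. For the unnormalised part, I would write
\begin{equation*}
\rho_t(\varphi) - \rho_t^{\tau,m}(\varphi)
= \tilde{\mathrm{E}}\bigl[\varphi(X_t)\bigl(Z_t - Z_t^{\tau,m}\bigr) \bigm\vert \mathcal{Y}_t\bigr],
\end{equation*}
then combine the $L^2$-contractivity of the conditional expectation with Cauchy--Schwarz to get $\lVert \rho_t(\varphi) - \rho_t^{\tau,m}(\varphi)\rVert_{L^2} \leq \lVert \varphi(X_t)\rVert_{L^4}\,\lVert Z_t - Z_t^{\tau,m}\rVert_{L^4}$. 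The polynomial growth of $\varphi$, the Lipschitz coefficients of $X$ and the finite moments of $X_0$ give standard SDE moment estimates which make $\lVert\varphi(X_t)\rVert_{L^p}$ finite for every $p$, so the whole task collapses to proving $\lVert Z_t - Z_t^{\tau,m}\rVert_{L^p} \leq C_p \delta^m$ for a sufficiently large $p$.

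For this core $L^p$ bound I would write $Z_t - Z_t^{\tau,m} = Z_t\bigl(1 - \exp(\bar{\xi}_t^{\tau,m} - \xi_t)\bigr)$, use $|1-e^x| \leq |x| e^{|x|}$, and split the norm by H\"older into three factors: the moments of $Z_t$ (finite because $h$ is bounded, via Novikov), the exponential moments $\tilde{\mathrm{E}}[e^{c|\xi_t - \bar{\xi}_t^{\tau,m}|}]$ which must stay uniformly bounded in $\tau$, and the main error $\lVert \xi_t - \bar{\xi}_t^{\tau,m}\rVert_{L^q}$ which must be $O(\delta^m)$. The uniform exponential moment is precisely where the truncation function $\Gamma_{m,\delta}$ becomes indispensable for $m > 2$: by Remark~\ref{rmk:truncation}~(i), the truncated increment $\Gamma_{m,(t_{j+1}-t_j)}(\mu^{\tau,m}(j))$ is pointwise bounded by a constant multiple of $\delta$, so that the aggregate sum of $n = O(\delta^{-1})$ such terms still admits all exponential moments.

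For the error $\lVert \xi_t - \bar{\xi}_t^{\tau,m}\rVert_{L^q}$ itself I would work subinterval by subinterval, applying the stochastic Taylor formula of order $m-1$ to each process $\hat{h}_i(X_\cdot)$ in the integrand of \eqref{eq:log-likelihood}. The leading part reproduces $\xi_t^{\tau,m}(j)$, while the It\^o--Taylor remainder, integrated against $d\hat{Y}_s^i$, admits a Burkholder--Davis--Gundy bound of order $\delta^{m+1/2}$ per subinterval; the $n = O(\delta^{-1})$ martingale-like contributions then sum to $O(\delta^m)$ in $L^q$ after exploiting orthogonality. The extra discrepancy $\mu^{\tau,m}(j) - \Gamma_{m,(t_{j+1}-t_j)}(\mu^{\tau,m}(j))$ present when $m > 2$ would be handled by splitting along the event $\{|\mu^{\tau,m}(j)| \leq \delta^{1/2}\}$, on which the explicit form of $\Gamma$ makes the discrepancy polynomially small in $\delta$, and its complement, whose probability is negligible in every power of $\delta$ by Chebyshev combined with the high $L^q$ moments of iterated It\^o integrals. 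Finally, to pass from $\rho$ to $\pi$, I would use the identity
\begin{equation*}
\pi_t(\varphi) - \pi_t^{\tau,m}(\varphi)
= \frac{\rho_t(\varphi) - \rho_t^{\tau,m}(\varphi)}{\rho_t(\boldsymbol{1})}
- \pi_t^{\tau,m}(\varphi) \, \frac{\rho_t(\boldsymbol{1}) - \rho_t^{\tau,m}(\boldsymbol{1})}{\rho_t(\boldsymbol{1})},
\end{equation*}
together with the classical estimate $\tilde{\mathrm{E}}[\rho_t(\boldsymbol{1})^{-p}] < \infty$ for every $p$, itself a consequence of the boundedness of $h$, and a change of measure from $\tilde{P}$ to $P$ to absorb the extra factor of $\lVert\pi_t^{\tau,m}(\varphi)\rVert$.

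I expect the delicate step to be the truncation analysis for $m > 2$: one must simultaneously preserve enough regularity of $\bar{\xi}_t^{\tau,m}$ to have exponential moments uniformly in $\tau$, yet keep the extra error introduced by replacing $\mu^{\tau,m}(j)$ by $\Gamma_{m,(t_{j+1}-t_j)}(\mu^{\tau,m}(j))$ of order $\delta^m$ in $L^q$. Matching these two requirements is precisely what dictates the particular form of the truncation function in \eqref{eq:trunc_fct}, and tracking the constants through this balance is the technical heart of the argument.
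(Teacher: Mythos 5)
This theorem is not proved in the present paper: it is imported verbatim from \cite{crisan2019high} (Theorem~2.3 there), so there is no in-paper proof to compare against. Judging your outline on its own merits, the high-level frame (Kallianpur--Striebel reduction, exponential-moment control via the truncation $\Gamma_{q,\delta}$, and the final $\rho\to\pi$ decomposition with $\tilde{\mathrm{E}}[\rho_t(\boldsymbol{1})^{-p}]<\infty$) is sound and matches the standard strategy. However, there is a genuine gap at the core step.

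The reduction ``the whole task collapses to proving $\lVert Z_t - Z_t^{\tau,m}\rVert_{L^p}\leq C_p\delta^m$'' cannot work, because that bound is false: the strong ($L^p$) error of the scheme is only of order $\delta^{m/2}$, not $\delta^m$. Already for $m=1$ the dominant term in $\xi_t-\xi_t^{\tau,1}$ is $\sum_j\int_{t_j}^{t_{j+1}}\langle \nabla h(X_{t_j})\sigma(X_{t_j})(V_s-V_{t_j}),\mathrm{d}Y_s\rangle$ plus higher order; by the It\^o isometry each summand has $L^2$-norm of order $(t_{j+1}-t_j)$, and the $n=O(\delta^{-1})$ orthogonal summands aggregate to $O(\delta^{1/2})$, not $O(\delta)$. (Your per-subinterval estimate ``$\delta^{m+1/2}$'' should read $\delta^{(m+1)/2}$ --- the iterated integrals of length $m$ contribute $\delta^{m/2}$, not $\delta^{m}$ --- and with the corrected exponent your orthogonality summation yields only $\delta^{m/2}$ globally.) The claimed rate $\delta^m$ is a \emph{weak-order} phenomenon: one must keep the conditional expectation $\tilde{\mathrm{E}}[\,\cdot\mid\mathcal{Y}_t]$ intact and exploit that, under $\tilde{P}$, $Y$ is independent of $(X_0,V)$, so that the $V$-driven remainder terms are averaged out rather than measured in $L^p$. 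Concretely, $\tilde{\mathrm{E}}\bigl[F(V)(V_s-V_{t_j})\bigr]=O(s-t_j)$ for smooth functionals $F$ (an integration-by-parts/duality gain of $\delta^{1/2}$ per factor), whereas $\lVert F(V)(V_s-V_{t_j})\rVert_{L^2}=O((s-t_j)^{1/2})$; iterating this over the length-$m$ multi-indices upgrades the local error from $\delta^{(m+1)/2}$ to $\delta^{m+1}$ inside the conditional expectation, which is what sums to the stated $\delta^m$. Your very first application of conditional Jensen plus Cauchy--Schwarz discards exactly this mechanism, so the argument as proposed can only establish the rate $\delta^{m/2}$.
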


\begin{remark}
Under the above assumption that $h$ is bounded and
 $\varphi$ has at most polynomial growth, the required condition
 from Theorem~2.4 in \cite{crisan2019high} that
  there exists $\varepsilon>0$ such that
$\sup_{\left\lbrace\tau\in\Pi(t):\delta<\delta_0\right\rbrace}
\big\lVert \pi_{t}^{\tau,m}(\varphi) \big\rVert_{L^{2+\varepsilon}}
<\infty$ holds.
\end{remark}

\section{Robustness of the approximation}
\label{sec:robustness}
The classical robustness of the filter as in Theorem~5.12
in~ \cite{bain2009fundamentals} states that
for every $t>0$ and bounded Borel measurable function $\varphi$ the filter
$\pi_t(\varphi)$ can be represented as a function of the observation \emph{path}
\begin{equation*}
    Y_{[0,t]}(\omega) = \lbrace Y_s(\omega) \colon s\in[0,t]\rbrace,
    \qquad \omega\in\Omega.
\end{equation*}
In particular, $Y_{[0,t]}$ is here a path-valued random variable.
The precise meaning of robustness is then that there exists a unique bounded
Borel measurable function $F^{t,\varphi}$ on the path space
$C([0,t];\mathrm{R}^{d_Y})$, that is the space of continuous
$\mathrm{R}^{d_Y}$-valued functions on $[0,t]$,
with the properties that
\begin{enumerate}[label=\roman*)]
\item $P$-almost surely,
\begin{equation*}
    \pi_t(\varphi) = F^{t,\varphi}(Y_{[0,t]})
\end{equation*}
and
\item $F^{t,\varphi}$ is continuous with respect to the supremum
    norm\footnote{For a subset $D\subseteq \mathrm{R}^l$ and a function
    $\psi\colon D \to \mathrm{R}^d$ we set
    $\lVert \psi \rVert_\infty
    = \max_{i=1,\ldots,d}\lVert \psi_i \rVert_\infty
    = \max_{i=1,\ldots,d}\sup_{x\in D}\lvert \psi_i(x) \rvert$}.
\end{enumerate}
The volume~\cite{bain2009fundamentals} contains further details on the robust
representation.
In the present paper, we establish the analogous result for the discretised
filter $\pi_t^{\tau,m}(\varphi)$ from \eqref{eq:disc_filter}.
It is formulated as follows.
\begin{theorem}
\label{thm:robust}
    Let $t>0$, $\tau=\{t_0,\ldots,t_n\}\in\Pi(t)$, let $m$ be a positive
    integer and let $\varphi$ be a bounded Borel measurable function.
    Then there exists a function
    $F^{\tau,m}_{\varphi}\colon C([0,t];\mathrm{R}^{d_Y}) \to \mathrm{R}$
    with the properties that
    \begin{enumerate}[label=\roman*)]
    \item $P$-almost surely,
        \begin{equation*}
            \pi_t^{\tau,m}(\varphi) = F^{\tau,m}_{\varphi}(Y_{[0,t]})
        \end{equation*}
        and
    \item for every two bounded paths $y_1, y_2 \in C([0,t];\mathrm{R}^{d_Y})$
        there exists a positive constant $C$ such that
        \begin{equation*}
            \big\rvert F^{\tau,m}_{\varphi}(y_1) -  F^{\tau,m}_{\varphi}(y_2)
            \big\rvert
            \leq C \lVert \varphi \rVert_\infty \lVert y_1-y_2 \rVert_\infty.
        \end{equation*}
    \end{enumerate}
\end{theorem}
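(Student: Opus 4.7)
The plan is to follow the well-established template of robust/pathwise filtering due to Clark and Davis, adapted to the truncated discretised log-likelihood $\bar{\xi}_t^{\tau,m}$. Under $\tilde P$, the observation $Y$ is a Brownian motion independent of the pair $(X,V)$, so the cross-variation $[Y,\eta_j^{l,m}(\cdot)]$ vanishes and integration by parts yields
\begin{equation*}
\int_{t_j}^{t_{j+1}}\!\langle\eta_j^{l,m}(s),\mathrm{d}Y_s\rangle
=\langle\eta_j^{l,m}(t_{j+1}),Y_{t_{j+1}}\rangle-\langle\eta_j^{l,m}(t_j),Y_{t_j}\rangle
-\int_{t_j}^{t_{j+1}}\!\langle Y_s,\mathrm{d}\eta_j^{l,m}(s)\rangle.
\end{equation*}
Substituting a generic path $y\in C([0,t];\mathrm{R}^{d_Y})$ for $Y$ on the right-hand side produces a formal candidate $\bar\xi_t^{\tau,m}(X,y)$ whose $y$-dependence enters only through pointwise evaluation and a pathwise Stieltjes-type integral. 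Using that $X$ and $\mathcal{Y}_t$ are independent under $\tilde P$, I would then define
\begin{equation*}
F^{\tau,m}_\varphi(y)=\frac{\tilde{\mathrm{E}}\bigl[\varphi(X_t)\exp(\bar\xi_t^{\tau,m}(X,y))\bigr]}{\tilde{\mathrm{E}}\bigl[\exp(\bar\xi_t^{\tau,m}(X,y))\bigr]},
\end{equation*}
leaving two tasks: analytic estimates on this map, and the $P$-a.s.\ identification $F^{\tau,m}_\varphi(Y_{[0,t]})=\pi_t^{\tau,m}(\varphi)$.

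For property~ii), the $y$-dependence of $\bar\xi_t^{\tau,m}(X,y)$ is affine in the order-$\leq 2$ part and wrapped inside the truncation $\Gamma_{m,\delta}$ in the higher-order increments; by Remark~\ref{rmk:truncation}, $\Gamma_{m,\delta}$ is bounded by $\delta/(2m-1)^{1/2m}$ and globally Lipschitz with constant $\leq 1$. Combined with the boundedness of $h$ and its derivatives and the polynomial moments of the iterated integrals $I_\alpha(\mathbf{1})_{t_j,\cdot}$, this yields an estimate $\lvert\bar\xi_t^{\tau,m}(X,y)\rvert\leq A(X)+B(1+\lVert y\rVert_\infty)$ with $A\in L^p(\tilde P)$ for every $p$ and $B$ deterministic, so that $\exp(\bar\xi_t^{\tau,m}(X,y))$ is locally bounded in $L^p(\tilde P)$ uniformly over bounded $y$ (Lemma~\ref{lem:Z_bound}). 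A Lipschitz bound $\lvert\bar\xi_t^{\tau,m}(X,y_1)-\bar\xi_t^{\tau,m}(X,y_2)\rvert\leq C(X)\lVert y_1-y_2\rVert_\infty$ follows analogously, and the mean-value inequality $\lvert e^a-e^b\rvert\leq(e^a\vee e^b)\lvert a-b\rvert$ together with Cauchy--Schwarz under $\tilde{\mathrm{E}}$ then delivers the required local Lipschitz estimate for $F^{\tau,m}_\varphi$ with constant proportional to $\lVert\varphi\rVert_\infty$ (Lemma~\ref{lem:G_lipschitz}).

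The main obstacle sits in property~i): although the IBP is an identity $\tilde P$-a.s., the function $F^{\tau,m}_\varphi$ must be defined for \emph{every} continuous $y$. The technical heart is therefore to exhibit a version of $\int\langle y_s,\mathrm{d}\eta_j^{l,m}(s)\rangle$ that is jointly measurable in $(\omega,y)\in\Omega\times C([0,t];\mathrm{R}^{d_Y})$ and coincides $\tilde P$-a.s.\ with the classical Itô integral for each fixed $y$. I would obtain this through a Kolmogorov-type Hölder estimate on $\eta_j^{l,m}(\cdot)$ (Lemma~\ref{lem:H13}) and the construction of the pathwise integral as the limit of deterministic dyadic Riemann sums, which is manifestly jointly Borel (Lemma~\ref{lem:int_version}). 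To avoid the sigma-algebra entanglement between $X$ and $Y$, one lifts the random variables to an auxiliary product probability space on which they live on independent factors; this is what Lemma~\ref{lem:null_set} accomplishes, and arranges that the pathwise integral evaluated at $Y_{[0,t]}(\omega)$ equals the standard Itô integral $\tilde P$-a.s.\ (Lemma~\ref{lem:integral_representation}). Once this identification is in hand, the IBP becomes a genuine identity $\bar\xi_t^{\tau,m}(X,Y_{[0,t]})=\bar\xi_t^{\tau,m}$ $\tilde P$-a.s., and applying $\tilde{\mathrm{E}}[\,\cdot\mid\mathcal{Y}_t]$ using the independence of $X$ and $\mathcal{Y}_t$ under $\tilde P$ (the freezing principle) together with the Kallianpur--Striebel formula~\eqref{eq:KS} yields $\pi_t^{\tau,m}(\varphi)=F^{\tau,m}_\varphi(Y_{[0,t]})$ first $\tilde P$-a.s.\ and then $P$-a.s.\ by $P\ll\tilde P$.
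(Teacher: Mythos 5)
Your proposal follows essentially the same route as the paper's proof: a formal integration by parts to define the pathwise functional, $L^p$-boundedness and local Lipschitz estimates exploiting the boundedness and Lipschitz continuity of the truncation $\Gamma_{m,\delta}$ (Lemmas~\ref{lem:Z_bound} and~\ref{lem:G_lipschitz}), joint measurability of the pathwise integral via dyadic Riemann sums (Lemma~\ref{lem:int_version}), the product-space lifting (Lemmas~\ref{lem:null_set} and~\ref{lem:integral_representation}), and the final identification through the Kallianpur--Striebel formula and the independence of $(X_0,V)$ and $Y$ under $\tilde{P}$. The only slip is that the H\"older estimate of Lemma~\ref{lem:H13} concerns the observation path, i.e.\ $Y_{[0,t]}\in\mathcal{H}_\gamma$ almost surely, not $\eta_j^{l,m}$: it is the $\gamma$-H\"older continuity of $y$ that controls the Riemann-sum error $\int\langle y_{\lfloor s\rfloor}-y_s,\mathrm{d}\eta_j^{c,c+1}(s)\rangle$.
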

Note that Theorem~\ref{thm:robust} implies the following statement in the
total variation norm.

\begin{corollary}
Let $t>0$, $\tau=\{t_0,\ldots,t_n\}\in\Pi(t)$, and let $m$ be a positive
    integer.
    Then, for every two bounded paths $y_1, y_2 \in C([0,t];\mathrm{R}^{d_Y})$
    there exists a positive constant $C$ such that
    \begin{equation*}
        \big\lVert \pi_t^{\tau,m,y_1} -
            \pi_t^{\tau,m,y_2}\big\rVert_\text{TV}
        =
        \sup_{\varphi \in B_b, \lVert \varphi \rVert_\infty \leq 1}
        \big\lvert F^{\tau,m}_{\varphi}(y_1) -
            F^{\tau,m}_{\varphi}(y_2)\big\rvert
        \leq C \lVert y_1-y_2 \rVert_\infty,
    \end{equation*}
where $B_b$ is the set of bounded and Borel measurable functions.
\end{corollary}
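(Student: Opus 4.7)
My plan is to follow the Clark--Davis strategy: apply integration by parts to remove every $dY$ integrator from the discretised log-likelihood $\bar{\xi}_t^{\tau,m}$, then read off from the resulting pathwise expression a candidate functional $F^{\tau,m}_{\varphi}$ on $C([0,t];\mathrm{R}^{d_Y})$, and finally check separately the two conclusions of the theorem.

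First, I would rewrite each stochastic integral of the form $\int_{t_j}^{t_{j+1}} \langle \eta_j^{l,m}(s), dY_s\rangle$ (both for $l=0$ in $\xi^{\tau,2}$ and for $l=2$ inside $\mu^{\tau,m}(j)$) by It\^o's product rule as $\langle \eta_j^{l,m}(t_{j+1}), Y_{t_{j+1}}\rangle - \langle \eta_j^{l,m}(t_j), Y_{t_j}\rangle - \int_{t_j}^{t_{j+1}} \langle Y_s, d\eta_j^{l,m}(s)\rangle$, using that $s\mapsto \eta_j^{l,m}(s)$ is a finite sum of iterated It\^o integrals of $V$ on $[t_j,s]$ multiplied by smooth coefficients evaluated at $X_{t_j}$, so its semimartingale decomposition involves only $dV$ integrators and $ds$ terms, and, crucially, no $dY$ terms. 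Substituting this identity into $\bar{\xi}_t^{\tau,m}$ produces an expression in which the observation enters only through point evaluations $Y_{t_j}$ and through Lebesgue integrals of processes not involving $Y$. Replacing every $Y$ in this expression by a generic continuous path $y$ gives a random variable $\bar{\xi}_t^{\tau,m,y}$ depending measurably on $y$, and I would then set $F^{\tau,m}_{\varphi}(y) = \tilde{\mathrm{E}}[\varphi(X_t)\exp(\bar{\xi}_t^{\tau,m,y})] \big/ \tilde{\mathrm{E}}[\exp(\bar{\xi}_t^{\tau,m,y})]$.

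To verify property (ii) I would estimate, for bounded paths $y_1,y_2$, the difference of the pathwise integrands entering the two exponentials. By Remark~\ref{rmk:truncation} the truncation $\Gamma_{m,\delta}$ is Lipschitz with constant at most $1$, while the non-truncated part $\xi_t^{\tau,2,y}$ depends on $y$ affinely with coefficients involving $X_{t_j}$ and iterated integrals of $V$; hence $\lvert\bar{\xi}_t^{\tau,m,y_1} - \bar{\xi}_t^{\tau,m,y_2}\rvert$ is bounded pathwise by a random variable linear in $\lVert y_1-y_2\rVert_\infty$ with a coefficient in every $L^p(\tilde{P})$. Combining this with the mean value estimate $\lvert e^a - e^b\rvert \leq (e^a+e^b)\lvert a-b\rvert$ and the boundedness of $\varphi$, a Cauchy--Schwarz argument on numerator and denominator of the ratio produces the claimed Lipschitz bound, with the truncation $\Gamma_{m,\delta}$ supplying just enough integrability of the exponentials for $m>2$; the bound on $\rho_t^{\tau,m,y}(\mathbf{1})$ from below (uniform on $y$ in a ball) enters through an auxiliary estimate of the style of Lemma~\ref{lem:Z_bound}.

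The main obstacle is property (i), the almost-sure identification $F^{\tau,m}_{\varphi}(Y_{[0,t]}) = \pi_t^{\tau,m}(\varphi)$. The integration by parts identity underlying the definition of $F^{\tau,m}_{\varphi}$ is, \emph{a priori}, an almost-sure identity between two random variables for a \emph{fixed} integrator $Y$, whereas (i) requires that this identity survive the substitution of the random path $Y_{[0,t]}$ into an object that has been defined purely path-by-path. To close this gap I would first construct a jointly product-measurable version of every stochastic integral $(\omega,y)\mapsto \int_0^t \Phi_s(\omega)\,dy_s$ appearing in the formula, agreeing $P$-almost surely with the It\^o integral when $y=Y(\omega)$; this is the content of the auxiliary Lemmas~\ref{lem:H13}, \ref{lem:int_version}, \ref{lem:null_set} and \ref{lem:integral_representation} in Section~\ref{sec:proof}. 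Equipped with such a version, the conditional Fubini theorem, combined with the independence of $Y$ and $(X,V)$ under $\tilde{P}$, allows the conditional expectation $\tilde{\mathrm{E}}[\,\cdot\,\vert \mathcal{Y}_t]$ to be identified with the pathwise expectation evaluated at $y=Y_{[0,t]}$, which is exactly $F^{\tau,m}_{\varphi}(Y_{[0,t]})$. Property (ii) established above then gives the announced regularity of the so-constructed $F^{\tau,m}_{\varphi}$.
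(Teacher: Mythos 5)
Your proposal is correct and follows essentially the paper's own route: the corollary is an immediate consequence of the $\varphi$-uniform local Lipschitz estimate $\lvert F^{\tau,m}_{\varphi}(y_1)-F^{\tau,m}_{\varphi}(y_2)\rvert\leq L_F\lVert\varphi\rVert_\infty\lVert y_1-y_2\rVert_\infty$ of Lemma~\ref{lem:G_lipschitz}, which the paper (and you) obtain from the boundedness and Lipschitz continuity of the truncation function, the elementary bound $\lvert e^a-e^b\rvert\leq(e^a+e^b)\lvert a-b\rvert$, Cauchy--Schwarz together with the moment bound of Lemma~\ref{lem:Z_bound}, and the Jensen lower bound on the denominator $G^{\tau,m}_{\mathbf{1}}$. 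The only remark is that the lengthy final paragraph of your proposal, on the almost-sure identification $F^{\tau,m}_{\varphi}(Y_{[0,t]})=\pi_t^{\tau,m}(\varphi)$ via the measurability lemmas, is not needed for this corollary: the statement concerns only the deterministic functionals $F^{\tau,m}_{\varphi}$ evaluated at two fixed paths, so it follows from the Lipschitz bound alone (whose constant is independent of $\varphi$) after taking the supremum over $\varphi\in B_b$ with $\lVert\varphi\rVert_\infty\leq 1$.
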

\begin{remark}
A natural question that arises in this context is to seek the rate of
pathwise convergence of $F_\varphi^{\tau,m}$ to $F_\varphi$ (defined as the 
limit
of $F_\varphi^{\tau,m}$ when the meshsize goes to zero) as functions on the path
space. The rate of pathwise convergence is expected to be dependent on the
H\"older constant of the observation path. Therefore, it is expected to be not
better than $\frac{1}{2}-\epsilon$ for a semimartingale observation.
The absence of high order iterated integrals of the observation process
in the construction of $F_\varphi^{\tau,m}$ means that
one cannot obtain \emph{pathwise} high order approximations based on the work in
\cite{crisan2019high}. Such approximations will no longer be continuous in the
supremum norm. Thus we need to consider rough path norms in this context.
In a different setting, Clark showed in the earlier paper~\cite{clark1980rate}
that one cannot construct pathwise
approximations of solutions of SDEs by using only increments of the driving
Brownian motion.
\end{remark}
In the following and final part of the paper, we exhibit the proof of
Theorem~\ref{thm:robust}.

\section{Proof of the robustness of the approximation}
\label{sec:proof}
We begin by constructing what will be the robust representation.
Consider, for all $y\in C([0,t];\mathrm{R}^{d_Y})$,
\begin{align*}
    \Xi_{t}^{\tau,m}(y)
        &=
        \sum_{j=0}^{n-1}
        \big\lbrace
        \kappa_j^{0,m}
        + \big\langle \eta_j^{0,m}(t_{j+1}), y_{t_{j+1}} \big\rangle
        - \big\langle \eta_j^{0,m}(t_{j}), y_{t_{j}} \big\rangle
        - \int_{t_j}^{t_{j+1}}
        \big\langle y_s , \mathrm{d}\eta_j^{0,m}(s) \big\rangle
        \big\rbrace\\
        &=
        \sum_{j=0}^{n-1}
        \big\lbrace
        \kappa_j^{0,m}
        + \big\langle \eta_j^{0,m}(t_{j+1}), y_{t_{j+1}}\big\rangle
        - \big\langle h(X_{t_{j}}), y_{t_{j}} \big\rangle
        - \int_{t_j}^{t_{j+1}} \big\langle y_s ,\mathrm{d}\eta_j^{0,m}(s) 
        \big\rangle
        \big\rbrace\\
        &=
        \big\langle h(X_{t_{n}}), y_{t_{n}} \big\rangle
        - \big\langle h(X_{t_{0}}), y_{t_{0}} \big\rangle\\
        &+
        \sum_{j=0}^{n-1}
        \big\lbrace
        \kappa_j^{0,m}
        + \big\langle \eta_j^{0,m}(t_{j+1})-h(X_{t_{j+1}}), y_{t_{j+1}}
        \big\rangle
        - \int_{t_j}^{t_{j+1}} \big\langle y_s ,\mathrm{d}\eta_j^{0,m}(s) 
        \big\rangle
        \big\rbrace
\end{align*}
and further, for $m>2$,
\begin{equation*}
    M^{\tau,m}_j(y)
    =
        \kappa_j^{2,m} +
        \big\langle \eta_j^{2,m}(t_{j+1}), y_{t_{j+1}}\big\rangle
        -
        \int_{t_j}^{t_{j+1}}
        \big\langle
        y_s, d\eta_j^{2,m}(s)
        \big\rangle
\end{equation*}
so that we can define
\begin{equation*}
\bar{\Xi}_{t}^{\tau,m}(y)=
    \begin{cases}
        \displaystyle \Xi_{t}^{\tau,m}(y), & \text{if}\ \ m=1,2\\
        \displaystyle \Xi_{t}^{\tau,2}(y)+\sum_{j=0}^{n-1}
        \Gamma_{m,(t_{j+1}-t_j)}
        \bigl(M^{\tau,m}_j(y)\bigr), & \text{if}\ \ m>2
    \end{cases}\;.
\end{equation*}
Furthermore, set
\begin{equation*}
    \mathcal{Z}^{\tau,m}_t(y) = \exp\bigl(\bar{\Xi}_{t}^{\tau,m}(y)\bigr).
\end{equation*}
\begin{example}
The robust approximation for $m=1$ and $m=2$ are given as follows.
First, if $m=1$, then
\begin{align*}
\Xi_{t}^{\tau,1}(y)
    &=
        \sum_{j=0}^{n-1}
        \big\lbrace
        \kappa_j^{0,1}
        + \big\langle \eta_j^{0,1}(t_{j+1}), y_{t_{j+1}}\big\rangle
        - \big\langle h(X_{t_{j}}), y_{t_{j}} \big\rangle
        - \int_{t_j}^{t_{j+1}} \big\langle y_s ,\mathrm{d}\eta_j^{0,1}(s) 
        \big\rangle
        \big\rbrace\\
    &= \sum_{j=0}^{n-1}
        \big\lbrace
        -\frac{1}{2}\langle h,h \rangle(X_{t_j})(t_{j+1}-t_j)
        + \big\langle h(X_{t_{j}}), y_{t_{j+1}} - y_{t_{j}}\big\rangle
        \big\rbrace
\end{align*}
and also $\bar{\Xi}_{t}^{\tau,1}(y) = \Xi_{t}^{\tau,1}(y)$ so that
$\mathcal{Z}^{\tau,1}_t(y) = \exp\bigl({\Xi}_{t}^{\tau,1}(y)\bigr)$.
If $m=2$, then
\begin{align*}
\Xi_{t}^{\tau,2}(y) &= \Xi_{t}^{\tau,1}(y) +
\sum_{j=0}^{n-1}
    \big\lbrace
    \kappa_j^{1,2} +
\big\langle \eta_j^{1,2}(t_{j+1}), y_{t_{j+1}}\big\rangle
- \int_{t_j}^{t_{j+1}} \big\langle y_s ,\mathrm{d}\eta_j^{1,2}(s) \big\rangle
        \big\rbrace \\
    &=
    \Xi_{t}^{\tau,1}(y) -
    \sum_{\alpha\in \mathcal{M}_1}
    \sum_{j=0}^{n-1}
    \frac{1}{2}L^{\alpha}\langle h, h \rangle(X_{t_j})
    \int_{t_j}^{t_{j+1}} V_s^{\alpha}-V_{t_j}^\alpha\,\mathrm{d}s
     \\
    &+
    \sum_{\alpha\in \mathcal{M}_1}
    \sum_{j=0}^{n-1}
    \int_{t_j}^{t_{j+1}}
    \bigl\langle L^\alpha h(X_{t_j}), y_{t_{j+1}} - y_s \bigr\rangle
    \,\mathrm{d}V_s^\alpha.
\end{align*}
Therefore, also $\bar{\Xi}_{t}^{\tau,2}(y) = \Xi_{t}^{\tau,2}(y)$ so that
$\mathcal{Z}^{\tau,2}_t(y) = \exp\bigl({\Xi}_{t}^{\tau,2}(y)\bigr)$.
\end{example}

First, we show that the newly constructed $\mathcal{Z}^{\tau,m}_t$ is locally
bounded.
\begin{lemma}
\label{lem:Z_bound}
    Let $t>0$, let $\tau=\{t_0,\ldots,t_n\}\in\Pi(t)$ be a partition with mesh
    size $\delta$ and let $m$ be a positive integer.
    Then, for all $R>0$, $p\geq 1$ there exists a positive constant $B_{p,R}$
    such that
    \begin{equation*}
        \sup_{\lVert y \rVert_\infty \leq R}
        \big\lVert \mathcal{Z}^{\tau,m}_t(y) \big\rVert_{L^p} \leq B_{p,R}.
    \end{equation*}
\end{lemma}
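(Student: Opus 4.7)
The plan is to decompose $\bar{\Xi}_t^{\tau,m}(y)$, uniformly over $\lVert y\rVert_\infty\leq R$, as a deterministically bounded term $A(y)$ with $\lvert A(y)\rvert\leq C_R$ plus a continuous local martingale $M_t(y)$ driven by $V$ with deterministic quadratic-variation bound $\langle M(y)\rangle_t\leq K_R$. The stochastic exponential $\exp(pM_t(y)-\tfrac{p^2}{2}\langle M(y)\rangle_t)$ is then a non-negative super-martingale of mean at most one under $\tilde{P}$, so that $\tilde{\mathrm{E}}[\exp(pM_t(y))]\leq\exp(\tfrac{p^2}{2}K_R)$; combining this with the pointwise bound on $A(y)$ yields $\lVert\mathcal{Z}_t^{\tau,m}(y)\rVert_{L^p}\leq e^{C_R+\tfrac{p}{2}K_R}$, which furnishes the required $B_{p,R}$.

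A preliminary reduction disposes of the case $m>2$: by Remark~\ref{rmk:truncation}~i) the truncated correction $\sum_{j=0}^{n-1}\Gamma_{m,(t_{j+1}-t_j)}(M_j^{\tau,m}(y))$ is bounded by $t/(2m-1)^{1/2m}$, uniformly in $y$ and in the partition, so $\mathcal{Z}_t^{\tau,m}(y)\leq e^{t/(2m-1)^{1/2m}}\mathcal{Z}_t^{\tau,2}(y)$ and it suffices to treat $m\in\{1,2\}$. For these I would start from the integration-by-parts reformulation of $\Xi_t^{\tau,m}(y)$ displayed at the beginning of the section and treat its four families of summands in turn: (a) the telescoped boundary term is pointwise bounded by $2R\lVert h\rVert_\infty$; (b) the coefficients $\kappa_j^{0,m}$ split into a pointwise bounded piece and, when $\lvert\alpha\rvert=1$ with $\alpha=r\geq 1$, a piece reshaped via $\int_{t_j}^{t_{j+1}}(V_s^r-V_{t_j}^r)\,\mathrm{d}s=\int_{t_j}^{t_{j+1}}(t_{j+1}-s)\,\mathrm{d}V_s^r$ into a stochastic integral with integrand of modulus at most $\delta\lVert L^r\langle h,h\rangle\rVert_\infty$; (c) the integrals $\int_{t_j}^{t_{j+1}}\langle y_s,\mathrm{d}\eta_j^{0,m}(s)\rangle$ vanish for $m=1$ and for $m=2$ are Itô integrals with integrand bounded by $R\lVert L^\alpha h\rVert_\infty$; and (d) the Taylor-residual terms $\sum_j\langle\eta_j^{0,m}(t_{j+1})-h(X_{t_{j+1}}),y_{t_{j+1}}\rangle$ are reshaped, by applying Itô's formula to $h(X_{t_{j+1}})-h(X_{t_j})$ (and for $m=2$ also to the first-order residuals $L^r h(X_\cdot)-L^r h(X_{t_j})$), into a Lebesgue integral plus an Itô integral, both with integrand bounded by $2R$ times an $L^\infty$-norm of a derivative of $h$. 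Collecting the four pieces produces the claimed decomposition.

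The main obstacle is the bookkeeping step of verifying that every constant appearing above depends only on $R$, $t$ and the $L^\infty$-norms of $h$ and its iterated derivatives (all finite under the hypotheses inherited from Theorem~\ref{thm: Main Filtering_2}), with no hidden factor of $n$ or $\delta^{-1}$. This is the structural payoff of the integration by parts that defines $\Xi_t^{\tau,m}(y)$: it converts every $\mathrm{d}y_s$-integral -- which would require bounded variation of $y$ -- into a $\mathrm{d}V_s$-integral whose integrand features $y$ only pointwise, so that a sup-norm bound on the observation path is enough to tame it. Once this bookkeeping is carried out, the exponential super-martingale estimate from the first paragraph closes the argument.
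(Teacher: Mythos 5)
Your proposal follows essentially the same route as the paper: reduce to $m\in\{1,2\}$ via the uniform bound on the truncation function from Remark~\ref{rmk:truncation}, observe that after integration by parts the exponent splits into a deterministically bounded term plus It\^o integrals against $V$ whose integrands are deterministically bounded by constants depending only on $R$, $t$ and sup-norms of $h$ and its derivatives, and conclude from exponential integrability of such integrals --- where you are in fact more explicit than the paper, which disposes of the last step with a bare ``$<\infty$''. One caution on your item (d): for $m=2$ you must stop after writing
$\eta_j^{0,2}(t_{j+1})-h(X_{t_{j+1}})=-\sum_{\lvert\alpha\rvert=1}\int_{t_j}^{t_{j+1}}\bigl(L^{\alpha}h(X_s)-L^{\alpha}h(X_{t_j})\bigr)\,\mathrm{d}V_s^{\alpha}$,
whose integrand is already bounded by $2\max_{\alpha}\lVert L^{\alpha}h\rVert_{\infty}$; expanding the residuals $L^{\alpha}h(X_{\cdot})-L^{\alpha}h(X_{t_j})$ by It\^o once more, as your parenthetical suggests, would produce iterated integrals whose outer integrands are unbounded second-chaos quantities, destroying the deterministic quadratic-variation bound on which your supermartingale estimate relies (the paper sidesteps this entirely by bounding $\int_{t_j}^{t_{j+1}}\langle y_{t_{j+1}}-y_s,\mathrm{d}\eta_j^{1,2}(s)\rangle$ directly, never touching $h(X_{t_{j+1}})$).
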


\begin{proof}
    Notice that, by Remark \ref{rmk:truncation}, in the case $m\geq 2$,
    we have for all
    $y\in C([0,t];\mathrm{R}^{d_Y})$ that
    \begin{equation*}
        \bar{\Xi}_{t}^{\tau,m}(y) \leq
        \Xi_{t}^{\tau,2}(y)+\frac{n\delta}{(2m-1)^{1/2m}}.
    \end{equation*}
    This implies that for all
    $y\in C([0,t];\mathrm{R}^{d_Y})$,
    \begin{equation*}
        \mathcal{Z}^{\tau,m}_t(y) =
            \exp\bigl(\bar{\Xi}_{t}^{\tau,m}(y)\bigr) \leq
            \exp\bigl(\Xi_{t}^{\tau,2}(y)\bigr)
            \exp\biggl(\frac{n\delta}{(2m-1)^{1/2m}}\biggr).
    \end{equation*}
    For $m=1$, we clearly have
    $
        \mathcal{Z}^{\tau,1}_t(y) = \exp\bigl(\Xi_{t}^{\tau,1}(y)\bigr).
    $
Hence, it suffices to show the result for $m=1,2$ only.
We have
\begin{align*}
    \Xi_{t}^{\tau,2}(y)
    & =
    \Xi_{t}^{\tau,1}(y)
    +
    \sum_{j=0}^{n-1}
    \big\lbrace
    \kappa_j^{1,2}
    +
    \big\langle \eta_j^{1,2}(t_{j+1}), y_{t_{j+1}}\big\rangle
    -\int_{t_j}^{t_{j+1}} \big\langle y_s ,\mathrm{d}\eta_j^{1,2}(s) \big\rangle
    \big\rbrace.
\end{align*}
Now, by the triangle inequality, boundedness of $y$,
and boundedness of $h$, we get
\begin{align*}
    \big\lvert \Xi_t^{\tau,1}(y) \big\rvert &=
    \bigg\lvert
    \sum_{j=0}^{n-1}
        \big\lbrace
        \kappa_j^{0,1}
        + \big\langle \eta_j^{0,1}(t_{j+1}), y_{t_{j+1}}\big\rangle
        - \big\langle h(X_{t_{j}}), y_{t_{j}} \big\rangle
        - \int_{t_j}^{t_{j+1}} \big\langle y_s ,\mathrm{d}\eta_j^{0,1}(s)
        \big\rangle
        \big\rbrace
        \bigg\rvert \displaybreak[1]\\
    &=
        \bigg\lvert
    \sum_{j=0}^{n-1}
        \big\lbrace
        \kappa_j^{0,1}
        + \big\langle h(X_{t_{j}}), y_{t_{j+1}}-y_{t_{j}}\big\rangle
        \big\rbrace
        \bigg\rvert \displaybreak[1]\\
    &=
    \bigg\lvert
    \sum_{j=0}^{n-1}
    \big\lbrace
    -\frac{1}{2}\langle h(X_{t_j}), h(X_{t_j})\rangle
    (t_{j+1}-t_j)
    +\langle h(X_{t_{j}}), y_{t_{j+1}}-y_{t_{j}} \rangle
    \big\rbrace
    \bigg\rvert \displaybreak[1]\\
    &\leq
   \frac{t d_Y \lVert h \rVert_\infty^2 }{2}
    + 2 R \lVert h \rVert_\infty = C_0,
\end{align*}
where we denote the final constant by $C_0$.
Furthermore, by the triangle inequality, boundedness of $y$,
and boundedness of $h$ and its derivatives,
\begin{align*}
    & \bigg\lvert
    \sum_{j=0}^{n-1}
    \big\lbrace
    \kappa_j^{1,2}
    +\big\langle \eta_j^{1,2}(t_{j+1}), y_{t_{j+1}}\big\rangle
    -\int_{t_j}^{t_{j+1}} \big\langle y_s ,\mathrm{d}\eta_j^{1,2}(s)\big\rangle
    \big\rbrace
    \bigg\rvert \displaybreak[0]\\
    &=
    \sum_{\alpha\in \mathcal{M}_1}
    \Biggl\lbrace
    \bigg\lvert
    \sum_{j=0}^{n-1}
    \frac{1}{2}L^{\alpha}\langle h, h \rangle(X_{t_j})
    \int_{t_j}^{t_{j+1}} V_s^{\alpha}-V_{t_j}^\alpha\,\mathrm{d}s
     \\
    &+
    \sum_{j=0}^{n-1}
    \int_{t_j}^{t_{j+1}}
    \bigl\langle L^\alpha h(X_{t_j}), y_{t_{j+1}} - y_s \bigr\rangle
    \,\mathrm{d}V_s^\alpha
    \bigg\rvert
    \Biggr\rbrace \displaybreak[1]\\
    &\leq
    \sum_{\alpha\in \mathcal{M}_1\setminus\{0\}}
    \Biggl\lbrace
    \bigg\lvert
    \sum_{j=0}^{n-1}
    \int_{t_j}^{t_{j+1}}
    \frac{1}{2}L^{\alpha}\langle h, h\rangle(X_{t_j})
    (t_{j+1}-s)
    +
    \bigl\langle L^\alpha h(X_{t_j}), y_{t_{j+1}} - y_s \bigr\rangle
    \,\mathrm{d}V_s^{\alpha}
    \bigg\rvert
    \Biggr\rbrace\\
    &+
    \bigg\lvert
    \sum_{j=0}^{n-1}
    \int_{t_j}^{t_{j+1}}
    \frac{1}{2}L^{0}\langle h, h\rangle(X_{t_j})
     (s-t_j) + \bigl\langle L^0 h(X_{t_j}), y_{t_{j+1}} - y_s \bigr\rangle
     \,\mathrm{d}s
    \bigg\rvert
     \displaybreak[0]\\
    &\leq
    \sum_{\alpha\in \mathcal{M}_1\setminus\{0\}}
    \Biggl\lbrace
    \bigg\lvert
    \sum_{j=0}^{n-1}
    \int_{t_j}^{t_{j+1}}
    \frac{1}{2}L^{\alpha}\langle h, h\rangle(X_{t_j})
    (t_{j+1}-s)
    +
    \bigl\langle L^\alpha h(X_{t_j}), y_{t_{j+1}} - y_s \bigr\rangle
    \,\mathrm{d}V_s^{\alpha}
    \bigg\rvert
    \Biggr\rbrace\\
    &+
    \frac{1}{2}\delta t
    \lVert L^{0}\langle h, h\rangle \rVert_\infty
    +
    2 d_Y R t \lVert L^{0} h\rVert_\infty
    \displaybreak[1]\\
    &=
    C_1 +
    \sum_{\alpha\in \mathcal{M}_1\setminus\{0\}}
    \Biggl\lbrace
    \bigg\lvert
    \int_{0}^{t}
    \frac{1}{2}L^{\alpha}\langle h, h\rangle(X_{\lfloor s \rfloor})
    (\lceil s \rceil -s)
    +
    \bigl\langle L^\alpha h(X_{\lfloor s \rfloor}), y_{\lceil s \rceil} -
    y_s \bigr\rangle
    \,\mathrm{d}V_s^{\alpha}
    \bigg\rvert
    \Biggr\rbrace.
\end{align*}
Here, $C_1$ is a constant introduced for conciseness.
Then,
\begin{align*}
    &\bigl\lVert
    \mathcal{Z}^{\tau,2}_t(y)
    \bigr\rVert_{L^p} \displaybreak[0]\\
    &=
    \biggl\lVert
    \mathcal{Z}^{\tau,1}_t(y)
    \exp\bigl(
    \sum_{j=0}^{n-1}
    \big\lbrace
    \kappa_j^{1,2}
    -\big\langle \eta_j^{1,2}(t_{j+1}), y_{t_{j+1}}\big\rangle
    -\int_{t_j}^{t_{j+1}} \big\langle y_s ,\mathrm{d}\eta_j^{1,2}(s)\big\rangle
    \big\rbrace
    \bigr)
    \biggr\rVert_{L^p}\displaybreak[1]\\
    &\leq
    \exp\bigl(C_0+C_1\bigr)\\
    &\Biggl\lVert
    \exp\Bigl(
    \sum_{\alpha\in \mathcal{M}_1\setminus\{0\}}
    \Biggl\lbrace
    \bigg\lvert
    \int_{0}^{t}
    \frac{1}{2}L^{\alpha}\langle h, h\rangle(X_{\lfloor s \rfloor})
    (\lceil s \rceil -s)
    +
    \bigl\langle L^\alpha h(X_{\lfloor s \rfloor}), y_{\lceil s \rceil} - y_s 
    \bigr\rangle
    \,\mathrm{d}V_s^{\alpha}
    \bigg\rvert
    \Biggr\rbrace
    \Bigr)
    \Biggr\rVert_{L^p}\displaybreak[1]\\
    &<
    \infty.
\end{align*}
The lemma is thus proved.
\end{proof}

In analogy to the filter, we define the functions
\begin{equation*}
    G^{\tau,m}_{\varphi}(y) =
    \tilde{\mathrm{E}}[\varphi(X_t)\mathcal{Z}^{\tau,m}_t(y)]
\end{equation*}
and
\begin{equation*}
    F^{\tau,m}_{\varphi}(y) =
    \frac{G^{\tau,m}_{\varphi}(y)}{G^{\tau,m}_{\mathbf{1}}(y)} =
    \frac{\tilde{\mathrm{E}}[\varphi(X_t)\mathcal{Z}^{\tau,m}_t(y)]}
    {\tilde{\mathrm{E}}[\mathcal{Z}^{\tau,m}_t(y)]}.
\end{equation*}

\begin{lemma}
\label{lem:G_lipschitz}
    Let $\tau\in\Pi(t)$ be a partition,
    let $m$ be a positive integer and let
    $\varphi$ be a bounded Borel measurable function.
    Then the functions
    $G_\varphi^{\tau,m}\colon C([0,t];\mathrm{R}^{d_Y})\to \mathrm{R}$ and
    $F_\varphi^{\tau,m}\colon C([0,t];\mathrm{R}^{d_Y})\to \mathrm{R}$
    are locally Lipschitz continuous and locally bounded.
    Specifically, for every two paths
    $y_1, y_2 \in C([0,t];\mathrm{R}^{d_Y})$ such that there
    exists a real number $R>0$ with $\lVert y_1 \rVert_\infty \leq R$ and
    $\lVert y_2 \rVert_\infty \leq R$, there exist constants
    $L_G$, $M_G$, $L_F$, and $M_F$ such that
    \begin{equation*}
        \big\lvert
        G_\varphi^{\tau,m}(y_1) - G_\varphi^{\tau,m}(y_2)
        \big\rvert
        \leq
        L_G
        \lVert \varphi \rVert_\infty
        \lVert y_1-y_2 \rVert_\infty
        \qquad
        \text{and}
        \qquad
        \big\lvert
        G_\varphi^{\tau,m}(y_1)
        \big\rvert
        \leq
        M_G \lVert \varphi \rVert_\infty
    \end{equation*}
    and
    \begin{equation*}
        \big\lvert
        F_\varphi^{\tau,m}(y_1) - F_\varphi^{\tau,m}(y_2)
        \big\rvert
        \leq
        L_F
        \lVert \varphi \rVert_\infty
        \lVert y_1-y_2 \rVert_\infty
        \qquad
        \text{and}
        \qquad
        \big\lvert
        F_\varphi^{\tau,m}(y_1)
        \big\rvert
        \leq
        M_F \lVert \varphi \rVert_\infty.
    \end{equation*}
\end{lemma}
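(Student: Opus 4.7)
The boundedness of $G_\varphi^{\tau,m}$ is immediate from Lemma~\ref{lem:Z_bound}: since $\varphi$ is bounded,
\begin{equation*}
    \big\lvert G_\varphi^{\tau,m}(y) \big\rvert
    \leq
    \lVert \varphi \rVert_\infty \,
    \tilde{\mathrm{E}}\bigl[\mathcal{Z}_t^{\tau,m}(y)\bigr]
    \leq
    \lVert \varphi \rVert_\infty \, B_{1,R},
\end{equation*}
so we may take $M_G = B_{1,R}$. For the Lipschitz estimate on $G_\varphi^{\tau,m}$, the plan is to combine the elementary inequality $\lvert e^a - e^b \rvert \leq \lvert a-b \rvert(e^a + e^b)$ with the Cauchy--Schwarz inequality to write
\begin{equation*}
    \big\lvert G_\varphi^{\tau,m}(y_1) - G_\varphi^{\tau,m}(y_2) \big\rvert
    \leq
    \lVert \varphi \rVert_\infty
    \big\lVert \bar{\Xi}_t^{\tau,m}(y_1) - \bar{\Xi}_t^{\tau,m}(y_2)
    \big\rVert_{L^2}
    \big\lVert \mathcal{Z}_t^{\tau,m}(y_1)
      + \mathcal{Z}_t^{\tau,m}(y_2) \big\rVert_{L^2},
\end{equation*}
and then control each factor. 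The second factor is bounded by $2 B_{2,R}$ via Lemma~\ref{lem:Z_bound}.

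The first factor is the core estimate. The key observation is that $y \mapsto \Xi_t^{\tau,m}(y)$ is \emph{affine} in $y$: all summands either do not depend on $y$ at all (the $\kappa_j^{l,m}$ and $\langle h(X_{t_j}),\cdot\rangle$ terms) or depend linearly on values of $y$ through $\langle \eta_j^{l,m}(t_{j+1}), y_{t_{j+1}}\rangle$ and the pathwise integrals $\int_{t_j}^{t_{j+1}}\langle y_s, \mathrm{d}\eta_j^{l,m}(s)\rangle$. Hence $\Xi_t^{\tau,m}(y_1) - \Xi_t^{\tau,m}(y_2)$ is a sum of point evaluations and Itô integrals of $y_1-y_2$ against processes built from $L^\alpha h(X_{t_j})$ and iterated Itô integrals of $\mathbf{1}$. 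Bounding each summand by $\lVert y_1 - y_2 \rVert_\infty$ times an $L^2$-integrable factor (using the Itô isometry for the stochastic-integral terms and the boundedness of $h$ and its derivatives together with moments of $X$ and of the iterated integrals) yields
\begin{equation*}
    \big\lVert \Xi_t^{\tau,m}(y_1) - \Xi_t^{\tau,m}(y_2) \big\rVert_{L^2}
    \leq C(\tau,m) \, \lVert y_1 - y_2 \rVert_\infty.
\end{equation*}
For $m>2$ the extra truncation terms are handled by Remark~\ref{rmk:truncation}(ii), which gives that $\Gamma_{m,\delta}$ is (globally) Lipschitz, so that
\begin{equation*}
    \Big\lvert \Gamma_{m,t_{j+1}-t_j}\bigl(M_j^{\tau,m}(y_1)\bigr)
    - \Gamma_{m,t_{j+1}-t_j}\bigl(M_j^{\tau,m}(y_2)\bigr) \Big\rvert
    \leq \big\lvert M_j^{\tau,m}(y_1) - M_j^{\tau,m}(y_2) \big\rvert,
\end{equation*}
and $M_j^{\tau,m}$ is itself linear in $y$, so the same estimate applies. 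This gives the Lipschitz constant $L_G$.

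For $F_\varphi^{\tau,m} = G_\varphi^{\tau,m}/G_{\mathbf{1}}^{\tau,m}$, the plan is the standard quotient decomposition
\begin{equation*}
    F_\varphi^{\tau,m}(y_1) - F_\varphi^{\tau,m}(y_2)
    =
    \frac{G_\varphi^{\tau,m}(y_1)-G_\varphi^{\tau,m}(y_2)}{G_{\mathbf{1}}^{\tau,m}(y_1)}
    + G_\varphi^{\tau,m}(y_2)
      \frac{G_{\mathbf{1}}^{\tau,m}(y_2)-G_{\mathbf{1}}^{\tau,m}(y_1)}
           {G_{\mathbf{1}}^{\tau,m}(y_1) G_{\mathbf{1}}^{\tau,m}(y_2)},
\end{equation*}
together with the bound $\lvert F_\varphi^{\tau,m}(y_1) \rvert \leq \lVert \varphi \rVert_\infty \, G_{\mathbf{1}}^{\tau,m}(y_1)/G_{\mathbf{1}}^{\tau,m}(y_1) = \lVert \varphi \rVert_\infty$ (this even gives $M_F = 1$). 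What is needed, and what I expect to be the main obstacle, is a \emph{uniform lower bound} $G_{\mathbf{1}}^{\tau,m}(y) \geq c_R > 0$ for $\lVert y \rVert_\infty \leq R$. For this I will use Jensen's inequality on the convex function $\exp$:
\begin{equation*}
    G_{\mathbf{1}}^{\tau,m}(y)
    = \tilde{\mathrm{E}}\bigl[\exp(\bar{\Xi}_t^{\tau,m}(y))\bigr]
    \geq \exp\bigl(\tilde{\mathrm{E}}[\bar{\Xi}_t^{\tau,m}(y)]\bigr).
\end{equation*}
For $m=1,2$ the Itô integrals in $\Xi_t^{\tau,m}(y)$ with respect to $V$ have zero $\tilde{\mathrm{E}}$-expectation (after verifying their integrability, which follows from boundedness of $h$, its derivatives, and $y$), so $\tilde{\mathrm{E}}[\Xi_t^{\tau,m}(y)]$ is bounded below by $-C(R)$. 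For $m>2$, Remark~\ref{rmk:truncation}(i) ensures $\Gamma_{m,t_{j+1}-t_j}(M_j^{\tau,m}(y)) \geq -(t_{j+1}-t_j)/(2m-1)^{1/2m}$, so
\begin{equation*}
    \bar{\Xi}_t^{\tau,m}(y) \geq \Xi_t^{\tau,2}(y) - \frac{t}{(2m-1)^{1/2m}}
\end{equation*}
and the same Jensen argument applies. Together with the already established bounds on $G_\varphi^{\tau,m}$ and the Lipschitz estimate, this yields the stated bounds on $F_\varphi^{\tau,m}$ with constants depending on $R$, $\tau$, $m$, $t$, the bounds on the coefficients, and moments of $X$.
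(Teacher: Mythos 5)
Your proposal is correct and follows essentially the same route as the paper: the elementary inequality $\lvert e^a-e^b\rvert\leq\lvert a-b\rvert(e^a+e^b)$ plus Cauchy--Schwarz and Lemma~\ref{lem:Z_bound}, reduction of the $\bar{\Xi}$-difference to the $m=1,2$ case via the Lipschitz property of $\Gamma$, term-by-term bounds using the affine dependence on $y$, and Jensen's inequality for the lower bound on $G_{\mathbf{1}}^{\tau,m}$. The only quibble is that Remark~\ref{rmk:truncation}(ii) gives the derivative bound $\frac{q(1-q)-1}{2q}\leq\Gamma_{q,\delta}'\leq 1$, whose lower end exceeds $1$ in absolute value for $q>2$, so the Lipschitz constant of $\Gamma_{m,\delta}$ is a constant depending on $m$ rather than exactly $1$ --- harmless, since the final constants are unspecified anyway.
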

\begin{proof}
We first show the results for $G_\varphi^{\tau,m}$.
Note that
\begin{equation*}
    \big\lvert
    \mathcal{Z}_t^{\tau,m}(y_1) -
    \mathcal{Z}_t^{\tau,m}(y_2)
    \big\rvert
    \leq
    \big(
    \mathcal{Z}_t^{\tau,m}(y_1) + \mathcal{Z}_t^{\tau,m}(y_2)
    \big)
    \bigl\lvert
    \bar{\Xi}_t^{\tau,m}(y_1) - \bar{\Xi}_t^{\tau,m}(y_2)
    \bigr\rvert.
\end{equation*}
Then, by the Cauchy-Schwarz inequality, for all $p\geq 1$ we have
\begin{equation}
    \big\lVert
    \varphi(X_t)
    \mathcal{Z}_t^{\tau,m}(y_1) -
    \varphi(X_t) \mathcal{Z}_t^{\tau,m}(y_2)
    \big\rVert_{L^p}
    \leq
    2B_{2p,R} \lVert \varphi \rVert_\infty
    \bigl\lVert
    \bar{\Xi}_t^{\tau,m}(y_1) - \bar{\Xi}_t^{\tau,m}(y_2)
    \bigr\rVert_{L^{2p}}.
\end{equation}
Thus, for $m>2$, we can exploit the effect of the truncation function and,
similarly to the proof of Lemma~\ref{lem:Z_bound}, it suffices to show the
result for $m=1,2$.
To this end, consider for all $q\geq 1$,
\begin{multline*}
    \bigl\lVert
    {\Xi}_t^{\tau,2}(y_1) - {\Xi}_t^{\tau,2}(y_2)
    \bigr\rVert_{L^{q}}
    \leq
    \bigl\lVert
    {\Xi}_t^{\tau,1}(y_1) - {\Xi}_t^{\tau,1}(y_2)
    \bigr\rVert_{L^{q}}\\
    +
    \Biggl\lVert
    \sum_{j=0}^{n-1}
    \big\lbrace
    \big\langle \eta_j^{1,2}(t_{j+1}), y_1({t_{j+1}})-y_2({t_{j+1}}) \big\rangle
    -\int_{t_j}^{t_{j+1}} \big\langle y_1(s)-y_2(s) ,\mathrm{d}\eta_j^{1,2}(s)
    \big\rangle
    \big\rbrace
    \Biggr\rVert_{L^{q}}.
\end{multline*}
First, we obtain for all $q\geq 1$,
\begin{multline*}
    \bigl\lVert
    {\Xi}_t^{\tau,1}(y_1) - {\Xi}_t^{\tau,1}(y_2)
    \bigr\rVert_{L^{q}}
    %%%
    =
    \Biggl\lVert
    \sum_{j=0}^{n-1}
        \langle
    h(X_{t_{j}}), (y_1(t_{j+1})-y_2(t_{j+1}))- (y_1(t_{j})-y_2(t_{j}))
        \rangle
    \Biggr\rVert_{L^{q}}\\
    %%%
    \leq
    2 d_Y \lVert h \rVert_\infty
    \lVert y_1-y_2 \rVert_\infty.
\end{multline*}
And second we have for all $q\geq 1$ that
\begin{align*}
    &\Biggl\lVert
    \sum_{j=0}^{n-1}
    \big\lbrace
    \int_{t_j}^{t_{j+1}} \big\langle
        (y_1({t_{j+1}})-y_1(s))-(y_2({t_{j+1}})-y_2(s)),
        \mathrm{d}\eta_j^{1,2}(s)
        \big\rangle
    \big\rbrace
    \Biggr\rVert_{L^{q}}\\
    &\leq
    \sum_{j=0}^{n-1}
    \Biggl\lVert
    \bigg\lvert
    \big\langle L^0 h(X_{t_j}), y_1(t_{j+1})-y_2(t_{j+1})\big\rangle
    ( {t_{j+1}} - {t_j})
    \bigg\rvert \\
    &+
    \bigg\lvert
    \int_{t_j}^{t_{j+1}}
    \bigl\langle L^0 h(X_{t_j}), y_1(s)-y_2(s) \bigr\rangle
    \,\mathrm{d}s
    \bigg\rvert \\
    &+
    \sum_{\alpha\in \mathcal{M}_1\setminus\{0\}}
    \bigg\lvert
    \big\langle L^\alpha h(X_{t_j}), y_1(t_{j+1})-y_2(t_{j+1})\big\rangle
    \bigl( V_{t_{j+1}}^\alpha - V_{t_j}^\alpha\bigr)
    \bigg\rvert \\
    &+
    \bigg\lvert
    \int_{t_j}^{t_{j+1}}
    \bigl\langle L^\alpha h(X_{t_j}), y_1(s)-y_2(s) \bigr\rangle
    \,\mathrm{d}V_s^\alpha
    \bigg\rvert
    \Biggr\rVert_{L^{q}}\\
    &\leq
    \biggl[
    \bar{C}_1 + \bar{C}_2
    \sum_{j=0}^{n-1}\sum_{\alpha\in \mathcal{M}_1\setminus\{0\}}
    \lVert V^\alpha_{t_{j+1}}-V^\alpha_{t_{j}} \rVert_{L^{q}}
    \biggr]
    \lVert y_1-y_2\rVert_\infty\\
    &\leq
    C \lVert y_1-y_2\rVert_\infty
\end{align*}
This and Lemma~\ref{lem:Z_bound} imply that $G_\varphi^{\tau,m}$
is locally Lipschitz and locally bounded.
To show the result for $F_\varphi^{\tau,m}$ we need to establish that
${1}/{G_{\mathbf{1}}^{\tau,m}}$ is locally bounded.
We have, using Jensen's inequality, that for $m\geq 2$
\begin{equation*}
    {G_{\mathbf{1}}^{\tau,m}} =
    \tilde{\mathrm{E}}
    \bigl[
    \mathcal{Z}_t^{\tau,m}
    \bigr]
    \geq
    \exp
    \Bigl(
    \tilde{\mathrm{E}}\bigl[\bar{\Xi}_t^{\tau,m}\bigr]
    \Bigr)
    \geq
    \exp
    \Bigl(\tilde{\mathrm{E}}\bigl[{\Xi}_t^{\tau,2}\bigr]\Bigr)
    \exp
    \Biggl(-\frac{n\delta}{(2m-1)^{1/2m}}\Biggr)
\end{equation*}
and for $m=1$ clearly
\begin{equation*}
    {G_{\mathbf{1}}^{\tau,1}} =
    \tilde{\mathrm{E}}
    \bigl[
    \mathcal{Z}_t^{\tau,1}
    \bigr]
    \geq
    \exp
    \Bigl(
    \tilde{\mathrm{E}}\bigl[{\Xi}_t^{\tau,1}\bigr]
    \Bigr).
\end{equation*}
Since the quantities
$\tilde{\mathrm{E}}\bigl[{\Xi}_t^{\tau,1}\bigr]$
and
$\tilde{\mathrm{E}}\bigl[{\Xi}_t^{\tau,2}\bigr]$
are finite, the lemma is proved.
\end{proof}

In the following, given $t > 0$, we set for every $\gamma \in (0,1/2)$,
\begin{equation*}
    \mathcal{H}_{\gamma} = \left\{ \,
        y\in C([0,t];\mathrm{R}^{d_Y}):
        \sup_{s_1,s_2\in [0,t]}
        \frac{\|y_{s_1}-y_{s_2}\|_\infty}{|s_1-s_2|^{{\gamma}}}
        < \infty
        \,\right\}
        \subseteq C([0,t];\mathrm{R}^{d_Y})
\end{equation*}
and recall that $Y_{[0,t]}\colon\Omega \to C([0,t];\mathrm{R}^{d_Y})$
denotes the random
variable in path space corresponding to the observation process $Y$.

\begin{lemma}
    \label{lem:H13}
    For all $t > 0$ and $\gamma \in (0,1/2)$, we have $\tilde{P}$-almost surely
    that $Y_{[0,t]}\in \mathcal{H}_{\gamma}$.
\end{lemma}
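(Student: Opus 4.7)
The plan is to recognise the statement as the standard H\"older regularity of Brownian motion on a compact interval and to invoke the Kolmogorov--Chentsov continuity criterion. By the construction of the reference measure $\tilde{P}$ recalled in Subsection~\ref{sec:filtering}, the observation process $Y$ is under $\tilde{P}$ a standard $d_Y$-dimensional Brownian motion. Hence the lemma reduces to the classical fact that Brownian motion almost surely has $\gamma$-H\"older continuous sample paths on $[0,t]$ for every $\gamma \in (0,1/2)$.

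First, I would exploit the Gaussian nature of the increments to obtain a moment bound of the form $\tilde{\mathrm{E}}\bigl[\lVert Y_{s_1}-Y_{s_2}\rVert_\infty^p\bigr] \leq C_{p,d_Y}\lvert s_1-s_2\rvert^{p/2}$ for every $p\geq 1$ and every $s_1,s_2\in[0,t]$, using the equivalence of the maximum and Euclidean norms on $\mathrm{R}^{d_Y}$ together with the standard scaling of Gaussian moments.

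Next, with $\gamma \in (0,1/2)$ fixed, I would choose $p$ large enough that $p/2 - 1 > \gamma p$ (any $p > 2/(1-2\gamma)$ suffices, which is possible precisely because $\gamma < 1/2$) and apply the Kolmogorov--Chentsov continuity theorem to conclude the existence of a $\gamma$-H\"older continuous modification of $Y$ on $[0,t]$. Since $Y$ itself has $\tilde{P}$-a.s.\ continuous paths by construction, and since two continuous modifications of the same process agree identically on $[0,t]$ outside a $\tilde{P}$-null set, the H\"older seminorm of $Y_{[0,t]}$ is $\tilde{P}$-almost surely finite. This is exactly the statement $Y_{[0,t]} \in \mathcal{H}_\gamma$.

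There is essentially no obstacle here, as this is an off-the-shelf application of a classical result; the only minor point to verify is that the bound produced by Kolmogorov--Chentsov is with respect to the norm $\lVert \cdot \rVert_\infty$ used to define $\mathcal{H}_\gamma$, and this follows immediately from the equivalence of norms in finite dimension.
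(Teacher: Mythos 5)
Your argument is correct, but it takes a genuinely different route from the paper's proof. You derive the Gaussian moment bound $\tilde{\mathrm{E}}\bigl[\lVert Y_{s_1}-Y_{s_2}\rVert_\infty^p\bigr]\leq C_{p,d_Y}\lvert s_1-s_2\rvert^{p/2}$ and feed it into the Kolmogorov--Chentsov criterion with $p>2/(1-2\gamma)$, then pass from the H\"older-continuous modification back to $Y$ itself via indistinguishability of continuous modifications; each of these steps is sound, and the exponent bookkeeping ($\gamma<1/2-1/p$) is right. The paper instead splits the H\"older supremum at a threshold $\delta$ on $\lvert s_1-s_2\rvert$: pairs with $\lvert s_1-s_2\rvert\geq\delta$ are handled by sample-path boundedness, while for close pairs it uses the elementary inequality $\delta^\gamma\geq\sqrt{2\delta\log(1/\delta)}$ for small $\delta$ together with L\'evy's exact modulus of continuity of Brownian motion. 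Your approach is more generic --- it needs only an increment moment estimate and would extend verbatim to non-Gaussian processes with comparable moments --- whereas the paper's argument is shorter for Brownian motion specifically because it borrows the sharp modulus $\sqrt{2\delta\log(1/\delta)}$ off the shelf. Both arguments deliver exactly the claimed conclusion for every $\gamma\in(0,1/2)$.
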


\begin{proof}
Recall that, under $\tilde{P}$, the observation process $Y$ is a Brownian motion
and, by the Brownian scaling property, it suffices to show the result for
$t=1$.
Therefore, let $\gamma \in (0,1/2)$ and note that for all ${\delta}\in (0,1]$ we
have
\begin{equation*}
    \sup_{s_1,s_2\in [0,1]}
    \frac{\|Y_{s_1}-Y_{s_2}\|_\infty}{|s_1-s_2|^{{\gamma}}}
    =
    \max\left\{
    \sup_{\substack{s_1,s_2\in [0,1]\\ |s_1-s_2|\leq {\delta}}}
        \frac{\|Y_{s_1}-Y_{s_2}\|_\infty}
            {|s_1-s_2|^{{\gamma}}},\;
    \sup_{\substack{s_1,s_2\in [0,1]\\ |s_1-s_2|\geq {\delta}}}
        \frac{\|Y_{s_1}-Y_{s_2}\|_\infty}
            {|s_1-s_2|^{{\gamma}}}
    \right\}.
\end{equation*}
The second element of the maximum above is easily bounded, $\tilde{P}$-almost
surely, by the sample path continuity.
For the first element, note that there exists $\delta_0\in (0,1)$ such that for
all $\delta\in (0,\delta_0]$,
\begin{equation*}
    \delta^{\gamma} \geq \sqrt{2\delta\log(1/\delta)}.
\end{equation*}
Therefore, it follows that $\tilde{P}$-almost surely,
\begin{equation*}
    \sup_{\substack{s_1,s_2\in [0,1]\\ |s_1-s_2|\leq\delta_0}}
            \frac{\|Y_{s_1}-Y_{s_2}\|_\infty}
                {|s_1-s_2|^{\gamma}}
    \leq
    \sup_{\substack{s_1,s_2\in [0,1]\\ |s_1-s_2|\leq\delta_0}}
        \frac{\|Y_{s_1}-Y_{s_2}\|_\infty}
            {\sqrt{2|s_1-s_2|\log(1/|s_1-s_2|)}}.
\end{equation*}
The L\'evy modulus of continuity of Brownian motion
further ensures that $\tilde{P}$-almost surely,
\begin{equation*}
        \limsup_{\delta \downarrow 0}
            \sup_{\substack{s_1,s_2\in [0,1]\\ |s_1-s_2|\leq\delta}}
            \frac{\|Y_{s_1}-Y_{s_2}\|_\infty}{\sqrt{2\delta\log(1/\delta)}}
        = 1.
\end{equation*}
The Lemma~\ref{lem:H13} thus follows.
\end{proof}

\begin{lemma}
\label{lem:int_version}
    Let $\tau=\{0=t_1<\ldots<t_n=t\}\in\Pi(t)$ be a partition,
    let $j\in\{0,\ldots,n-1\}$ and let $c$ be a positive integer.
    Then, there exists a version of the stochastic integral
    \begin{equation*}
    C([0,t];\mathrm{R}^{d_Y}) \times \Omega
    \ni \;
    (y, \omega)
    \mapsto
    \int_{t_j}^{t_{j+1}} \langle y_s,\mathrm{d}\eta_j^{c,c+1}(s,\omega)\rangle
    \; \in \mathrm{R}
    \end{equation*}
    such that
    it is equal on $\mathcal{H}_\gamma\times \Omega$ to a
    $\mathcal{B}(C([0,t];\mathrm{R}^{d_Y}))\times\mathcal{F}$-measurable
    mapping.
\end{lemma}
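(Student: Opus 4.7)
The plan is to construct the required jointly measurable version as a lim sup of dyadic Riemann sums---which are automatically jointly measurable---and then to use the H\"older regularity of $y$ provided by Lemma~\ref{lem:H13}, together with a Borel--Cantelli argument, to establish almost sure convergence of these Riemann sums to the standard stochastic integral for each fixed $y\in\mathcal{H}_\gamma$. First, I would expand the integrand componentwise using the identity $\eta_j^{c,c+1}(s)^i = \sum_{\alpha\in\mathcal{M}_c} L^\alpha h_i(X_{t_j}) I_\alpha(\mathbf{1})_{t_j,s}$ together with the recursive relation $dI_\alpha(\mathbf{1})_{t_j,s} = I_{\alpha-}(\mathbf{1})_{t_j,s}\,dV^{\alpha_c}_s$. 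This rewrites the integral $\int_{t_j}^{t_{j+1}} \langle y_s, d\eta_j^{c,c+1}(s)\rangle$ as a finite sum of terms $L^\alpha h_i(X_{t_j}) \int_{t_j}^{t_{j+1}} y^i_s I_{\alpha-}(\mathbf{1})_{t_j,s}\,dV^{\alpha_c}_s$, where the $\mathcal{F}_{t_j}$-measurable coefficients $L^\alpha h_i(X_{t_j})$ are harmless for joint measurability, and the summands with $\alpha_c=0$ reduce to ordinary Lebesgue integrals whose joint measurability in $(y,\omega)$ follows directly from continuous dependence on $y$ in the sup norm. The essential task is therefore to build, for each multi-index $\alpha$ with $r := \alpha_c \geq 1$, a jointly measurable version of the genuine stochastic integral $K^y_{\alpha,i} := \int_{t_j}^{t_{j+1}} y^i_s I_{\alpha-}(\mathbf{1})_{t_j,s}\,dV^r_s$.

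To this end, I would introduce, for every positive integer $n$ and the dyadic grid $s^n_k = t_j + k(t_{j+1}-t_j)2^{-n}$, $k=0,\ldots,2^n$, the Riemann sum $\tilde{K}^n_{\alpha,i}(y,\omega) = \sum_{k=0}^{2^n-1} y^i_{s^n_k}\,I_{\alpha-}(\mathbf{1})_{t_j,s^n_k}(\omega)\bigl(V^r_{s^n_{k+1}}(\omega)-V^r_{s^n_k}(\omega)\bigr)$. Each $\tilde{K}^n_{\alpha,i}$ is a finite sum of products of the continuous evaluation maps $y\mapsto y^i_{s^n_k}$ and $\mathcal{F}$-measurable random variables, and is therefore $\mathcal{B}(C([0,t];\mathrm{R}^{d_Y}))\times\mathcal{F}$-measurable. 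Setting $\tilde{K}_{\alpha,i}(y,\omega) := \limsup_{n\to\infty} \tilde{K}^n_{\alpha,i}(y,\omega)$ then yields a jointly measurable function on the whole of $C([0,t];\mathrm{R}^{d_Y})\times\Omega$, and reassembling through the coefficients $L^\alpha h_i(X_{t_j})$ produces the candidate version $\tilde{J}$ of the stochastic integral in the lemma.

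The main obstacle is to verify that $\tilde{J}(y,\cdot)$ agrees $\tilde{P}$-almost surely with the standard It\^o integral for every $y\in\mathcal{H}_\gamma$. Using the It\^o isometry and splitting the increment of the integrand, one controls the $y$-oscillation by the H\"older bound $\lVert y\rVert_{\mathcal{H}_\gamma}\,|s-s^n_k|^\gamma$ and the oscillation of $I_{\alpha-}(\mathbf{1})$ via $\tilde{\mathrm{E}}\bigl[\bigl|I_{\alpha-}(\mathbf{1})_{t_j,s}-I_{\alpha-}(\mathbf{1})_{t_j,s^n_k}\bigr|^2\bigr]\leq C|s-s^n_k|$---a direct consequence of the It\^o isometry applied to the iterated integral together with the uniform $L^2$-bound on $I_{(\alpha-)-}(\mathbf{1})$ on $[t_j,t_{j+1}]$. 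Together these give the estimate $\tilde{\mathrm{E}}\bigl[\bigl|\tilde{K}^n_{\alpha,i}(y,\cdot) - K^y_{\alpha,i}\bigr|^2\bigr] \leq C\bigl(\lVert y\rVert_{\mathcal{H}_\gamma}^2 + \lVert y\rVert_\infty^2\bigr)\,2^{-2n\gamma}$, which is summable in $n$ because $\gamma>0$. Chebyshev together with Borel--Cantelli then yields $\tilde{K}^n_{\alpha,i}(y,\cdot)\to K^y_{\alpha,i}$ $\tilde{P}$-a.s.\ for each fixed $y\in\mathcal{H}_\gamma$, so $\tilde{K}_{\alpha,i}(y,\cdot) = K^y_{\alpha,i}$ $\tilde{P}$-a.s.\ on $\mathcal{H}_\gamma$. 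Reassembling the contributions through the $\mathcal{F}_{t_j}$-measurable coefficients delivers the sought-after $\mathcal{B}(C([0,t];\mathrm{R}^{d_Y}))\times\mathcal{F}$-measurable version of the stochastic integral on $\mathcal{H}_\gamma\times\Omega$.
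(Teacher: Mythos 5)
Your proposal is correct and follows essentially the same route as the paper: jointly measurable dyadic Riemann sums, a $\limsup$ to define the version, an $L^2$ error bound of order $2^{-2n\gamma}$ obtained from the H\"older regularity of $y$, and Chebyshev plus Borel--Cantelli to get almost sure convergence for each fixed $y\in\mathcal{H}_\gamma$. The only (harmless) difference is that you freeze both $y$ and $I_{\alpha-}(\mathbf{1})$ at the grid points, which obliges you to also estimate the oscillation of the iterated integral, whereas the paper's Riemann sums use the increments of $\eta_j^{c,c+1}$ itself so that only the $y$-oscillation enters the error.
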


\begin{proof}
For $k$ a positive integer,
define for $y\in C([0,t];\mathrm{R}^{d_Y})$,
\begin{equation*}
    \mathcal{J}_j^{c,k}(y) =
    \sum_{i=0}^{k-1}
    \Bigl\langle
    y_{s_{i,j}},
    \Bigl(
    \eta_j^{c,c+1}(s_{i+1,j})
    -
    \eta_j^{c,c+1}(s_{i,j})
    \Bigr)
    \Bigr\rangle,
\end{equation*}
where
$s_{i,j}= \frac{i(t_{j+1}-t_j)}{k}+t_j$,
$i=0,\ldots, k$.
Furthermore, we set $\lfloor s \rfloor = s_{i,j}$ for
$s\in [\frac{i(t_{j+1}-t_j)}{k}+t_j, \frac{(i+1)(t_{j+1}-t_j)}{k}+t_j)$.
Then, for $y \in \mathcal{H}_\gamma$, we have
\begin{align*}
    &\tilde{\mathrm{E}}
    \biggl[
    \biggl(
    \mathcal{J}_j^{c,2^l}(y) -
    \int_{t_j}^{t_{j+1}} \langle y_s,\mathrm{d}\eta_j^{c,c+1}(s)\rangle
    \biggr)^2
    \biggr]\displaybreak[1]\\
    %%%
    &=
    \tilde{\mathrm{E}}
    \biggl[
    \biggl(
    \int_{t_j}^{t_{j+1}} \langle y_{\lfloor s \rfloor} -
    y_s,\mathrm{d}\eta_j^{c,c+1}(s)\rangle
    \biggr)^2
    \biggr]\displaybreak[1]\\
    %%%
    &=
    \tilde{\mathrm{E}}
    \biggl[
    \biggl(
    \sum_{\alpha\in \mathcal{M}_c}
    \sum_{i=0}^{d_Y}
    \int_{t_j}^{t_{j+1}}
    (y_{\lfloor s \rfloor}^i - y_s^i)L^\alpha h_i(X_{t_j})
    \,\mathrm{d}I_{\alpha}(\mathbf{1})_{t_j,s}
    \biggr)^2
    \biggr]\displaybreak[1]\\
    %%%
    &\leq
    (d_V+1) d_Y \sum_{i=0}^{d_Y}
    \sum_{\alpha\in \mathcal{M}_c}
    \tilde{\mathrm{E}}
    \biggl[
    \biggl(
    \int_{t_j}^{t_{j+1}}
    (y_{\lfloor s \rfloor}^i - y_s^i)L^\alpha h_i(X_{t_j})
    \,\mathrm{d}I_{\alpha}(\mathbf{1})_{t_j,s}
    \biggr)^2
    \biggr]\displaybreak[1]\\
    %%%
    &=
    (d_V+1)
    d_Y \sum_{i=0}^{d_Y}
    \sum_{\substack{\alpha\in \mathcal{M}_c\\ \alpha_{|\alpha|}\neq 0}}
    \tilde{\mathrm{E}}
    \biggl[
    \int_{t_j}^{t_{j+1}}
    \bigl(
    (y_{\lfloor s \rfloor}^i - y_s^i) L^\alpha h_i(X_{t_j})
    \bigr)^2
    \,\mathrm{d}\langle I_{\alpha}(\mathbf{1})_{t_j,\cdot}\rangle_s
    \biggr]\\
    &+
    (d_V+1)
    d_Y \sum_{i=0}^{d_Y}
    \sum_{\substack{\alpha\in \mathcal{M}_c\\ \alpha_{|\alpha|}= 0}}
    \tilde{\mathrm{E}}
    \biggl[
    \biggl(
    \int_{t_j}^{t_{j+1}}
    (y_{\lfloor s \rfloor}^i - y_s^i) L^\alpha h_i(X_{t_j})
    \,\mathrm{d}
    \bigl[
    \int_{t_j}^{s} I_{\alpha-}(\mathbf{1})_{t_j,r} \mathrm{d}r
    \bigr]
    \biggr)^2
    \biggr]\displaybreak[1]\\
    %%%
    &\leq
    (d_V+1)
    d_Y
    \frac{K(t_{j+1}-t_j)^{2\gamma}}{2^{2l\gamma}}
    \max_{\alpha\in\mathcal{M}_c}\lVert L^\alpha h(X_{t_j}) \rVert_\infty
    \Bigg\{
    \sum_{i=0}^{d_Y}
    \sum_{\substack{\alpha\in \mathcal{M}_c\\ \alpha_{|\alpha|}\neq 0}}
    \tilde{\mathrm{E}}
    \biggl[
    \int_{t_j}^{t_{j+1}}
    (I_{\alpha-}(\mathbf{1})_{t_j,s})^2
    \,\mathrm{d}s
    \biggr]\\
    &+
    \sum_{i=0}^{d_Y}
    \sum_{\substack{\alpha\in \mathcal{M}_c\\ \alpha_{|\alpha|}= 0}}
    \tilde{\mathrm{E}}
    \biggl[
    \biggl(
    \int_{t_j}^{t_{j+1}}
    I_{\alpha-}(\mathbf{1})_{t_j,s}
    \,\mathrm{d}s
    \biggr)^2
    \biggr]
    \Bigg\}\\
    &\leq
    \frac{(d_V+1) d_Y C K(t_{j+1}-t_j)^{2\gamma}}{2^{2l\gamma}},
\end{align*}
Where the constant $C$ is independent of $l$. Thus, by Chebyshev's inequality,
we get for all $\epsilon > 0$ that
\begin{equation*}
    \tilde{P}\biggl(
    \biggl\lvert
    \mathcal{J}_j^{c,2^l}(y) -
    \int_{t_j}^{t_{j+1}} \langle y_s,\mathrm{d}\eta_j^{c,c+1}(s)\rangle
    \biggr\rvert
    >
    \epsilon
    \biggr)
    \leq
    \frac{1}{\epsilon^2}
    \frac{(d_V+1) d_Y C K(t_{j+1}-t_j)^{2\gamma}}{2^{2l\gamma}}.
\end{equation*}
However, the bound on the right-hand side is summable over $l$ so that we
conclude using the first Borel-Cantelli Lemma that, for all $\epsilon > 0$,
\begin{equation*}
    \tilde{P}\biggl(
    \limsup_{l\rightarrow \infty}
    \biggl\lvert
    \mathcal{J}_j^{c,2^l}(y) -
    \int_{t_j}^{t_{j+1}} \langle y_s,\mathrm{d}\eta_j^{c,c+1}(s)\rangle
    \biggr\rvert
    >
    \epsilon
    \biggr)
    =
    0.
\end{equation*}
Thus, for all $y \in \mathcal{H}_\gamma$, the integral $\mathcal{J}_j^{c,k}(y)$
converges $\tilde{P}$-almost surely to
$\int_{t_j}^{t_{j+1}} \langle y_s,\mathrm{d}\eta_j^{c,c+1}(s)\rangle$.
Hence, we can define the limit on $\mathcal{H}_\gamma \times \Omega$ to be
\begin{equation*}
    \mathcal{J}_j^{c}(y)(\omega)
    =
    \limsup_{l \rightarrow \infty} \mathcal{J}_j^{c,l}(y)(\omega);
    \qquad
    (y,\omega) \in \mathcal{H}_\gamma \times \Omega.
\end{equation*}
Since the mapping
\begin{equation*}
    C([0,T];\mathrm{R}^{d_Y})\times\Omega
    \ni(y,\omega)
    \mapsto
    \limsup_{l \rightarrow \infty} \mathcal{J}_j^{c,l}(y)(\omega)
    \in \mathrm{R}
\end{equation*}
is jointly $\mathcal{B}(C([0,T];\mathrm{R}^{d_Y}))\otimes\mathcal{F}$ measurable
the lemma is proved.
\end{proof}

It turns out that proving the robustness result is simplified by first 
decoupling
the processes $X$ and $Y$ in the following manner.
Let $(\mathring{\Omega},\mathring{\mathcal{F}}, \mathring{P})$ be an indentical
copy of the probability space $(\Omega,\mathcal{F}, \tilde{P})$.
Then
\begin{equation*}
    \mathring{G}^{\tau,m}_{\varphi}(y) =
    \mathring{\mathrm{E}}
    [\varphi(\mathring{X}_t)\mathring{\mathcal{Z}}^{\tau,m}_t(y)]
\end{equation*}
is the corresponding representation of $G^{\tau,m}_{\varphi}(y)$ in the new
space, where
$\mathring{\mathcal{Z}}^{\tau,m}_t(y)
=
\exp(\mathring{\bar{\Xi}}_t^{\tau,m}(y))$ with
\begin{align*}
    \mathring{\Xi}_{t}^{\tau,m}(y)
        &=
        \sum_{j=0}^{n-1}
        \mathring{\kappa}_j^{0,m}
        +  \big\langle \mathring{\eta}_j^{0,m}(t_{j+1}), y_{t_{j+1}}\big\rangle
        -  \big\langle h(\mathring{X}_{t_j}), y_{t_j}\big\rangle
        - \int_{t_j}^{t_{j+1}}
        \big\langle y_s, \,\mathrm{d}\mathring{\eta}_j^{0,m}(s) \big\rangle
\end{align*}
and, for $m>2$,
\begin{align*}
    \mathring{M}^{\tau,m}_j(y)
    &=
        \mathring{\kappa}_j^{2,m}
        -
        \big\langle \mathring{\eta}_j^{2,m}(t_{j+1}), y_{t_{j+1}}\big\rangle
        -
        \int_{t_j}^{t_{j+1}}
        \big\langle
        y_s, d\mathring{\eta}_j^{2,m}(s)
        \big\rangle,
\end{align*}
so that, finally,
\begin{equation*}
\mathring{\bar{\Xi}}_t^{\tau,m}(y)
=
\begin{cases}
        \displaystyle \mathring{\Xi}_{t}^{\tau,m}(y), & \text{if}\ \ m=1,2\\
        \displaystyle \mathring{\Xi}_{t}^{\tau,2}(y)+\sum_{j=0}^{n-1}
        \Gamma_{m,(t_{j+1}-t_j)}
        \bigl(\mathring{M}^{\tau,m}_j(y)\bigr), & \text{if}\ \ m>2.
    \end{cases}\;
\end{equation*}
Moreover, with $\mathring{\mathcal{J}}_j^{c}(y)$ corresponding to
Lemma~\ref{lem:int_version} we can write for $y\in\mathcal{H}_\gamma$,
\begin{align*}
    \mathring{\Xi}_{t}^{\tau,m}(y)
        &=
        \sum_{j=0}^{n-1}
        \mathring{\kappa}_j^{0,m}
        +  \big\langle \mathring{\eta}_j^{0,m}(t_{j+1}), y_{t_{j+1}} \big\rangle
        - \big\langle h(\mathring{X}_{t_j}), y_{t_j}\big\rangle \\
        &-\sum_{c=0}^{m-1}\sum_{j=0}^{n-1} \mathring{\mathcal{J}}_j^{c}(y).
\end{align*}
In the same way we get, \emph{mutatis mutandis}, the expression for
$\mathring{\bar{\Xi}}_t^{\tau,m}(y)$ on $\mathcal{H}_\gamma$.
Now, we denote by
\begin{equation*}
(\check{\Omega}, \check{\mathcal{F}}, \check{P}) =
    (\Omega\times\mathring{\Omega},
    \mathcal{F}\otimes\mathring{\mathcal{F}},
    \tilde{P} \otimes \mathring{P})
\end{equation*}
the product probability space. In the following we lift the processes
$\mathring{\eta}$ and
$Y$ from the component spaces to the product space by writing
$Y(\omega,\mathring{\omega}) = Y(\omega)$ and
$\mathring{\eta}_j^{c,c+1}(\omega,\mathring{\omega}) =
\mathring{\eta}_j^{c,c+1}(\mathring{\omega})$ for all
$(\omega,\mathring{\omega})\in\check{\Omega}$.

\begin{lemma}
\label{lem:null_set}
Let $c$ be a positive integer and let $j\in \{0,\ldots,n\}$.
Then there exists a nullset $N_0 \in \mathcal{F}$ such that the mapping
$ (\omega,\mathring{\omega}) \mapsto
\mathring{\mathcal{J}}_j^c(Y_{[0,t]}(\omega))(\mathring{\omega})$
coincides on $(\Omega\setminus N_0)\times \mathring{\Omega}$
with an $\check{\mathcal{F}}$-measurable map.
\end{lemma}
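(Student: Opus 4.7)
The plan is to combine the joint measurability of the discrete Riemann approximation established in the proof of Lemma~\ref{lem:int_version} with the Borel measurability of the observation process viewed as a path-valued random variable, and then excise the null set where $Y_{[0,t]}$ fails to be $\gamma$-H\"older.

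First I would introduce the candidate measurable extension. Define, for every $(\omega,\mathring{\omega})\in\check{\Omega}$,
\begin{equation*}
    \check{J}(\omega,\mathring{\omega})
    = \limsup_{l\to\infty} \mathring{\mathcal{J}}_j^{c,2^l}(Y_{[0,t]}(\omega))(\mathring{\omega}),
\end{equation*}
with the convention that a divergent limsup takes a fixed default value. This is unambiguously defined on all of $\check{\Omega}$ because, for every fixed path $y\in C([0,t];\mathrm{R}^{d_Y})$, the discrete Riemann sum $\mathring{\mathcal{J}}_j^{c,2^l}(y)(\mathring{\omega})$ is a finite algebraic combination of $\mathring{\mathcal{F}}$-measurable variables.

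Next I would verify that $\check{J}$ is $\check{\mathcal{F}}$-measurable by composition. The proof of Lemma~\ref{lem:int_version} establishes that $(y,\omega)\mapsto\limsup_l\mathcal{J}_j^{c,l}(y)(\omega)$ is jointly $\mathcal{B}(C([0,t];\mathrm{R}^{d_Y}))\otimes\mathcal{F}$-measurable; the identical construction transcribed on the copy $(\mathring{\Omega},\mathring{\mathcal{F}},\mathring{P})$ gives joint measurability of $(y,\mathring{\omega})\mapsto\limsup_l\mathring{\mathcal{J}}_j^{c,2^l}(y)(\mathring{\omega})$. Composing this with the $\mathcal{F}/\mathcal{B}(C([0,t];\mathrm{R}^{d_Y}))$-measurable map $\omega\mapsto Y_{[0,t]}(\omega)$ (a standard fact for continuous processes, whose Borel structure on path space is generated by the finite-dimensional cylinder sets) yields the measurability of $\check{J}$ on $\mathcal{F}\otimes\mathring{\mathcal{F}}=\check{\mathcal{F}}$.

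Finally I would take
\begin{equation*}
    N_0 = \{ \omega\in\Omega : Y_{[0,t]}(\omega)\notin \mathcal{H}_\gamma \}
\end{equation*}
and confirm that $N_0\in\mathcal{F}$ with $\tilde{P}(N_0)=0$. The latter is immediate from Lemma~\ref{lem:H13}. For the former, $\mathcal{H}_\gamma$ is a Borel subset of $C([0,t];\mathrm{R}^{d_Y})$: by sample-path continuity the H\"older seminorm may be computed by restricting $s_1,s_2$ to a countable dense subset of $[0,t]$, so $\mathcal{H}_\gamma$ is a countable union over positive integers $R$ of sets of the form $\{y:\sup\|y_{s_1}-y_{s_2}\|_\infty/|s_1-s_2|^{\gamma}\leq R\}$, each of which is closed in the supremum norm as a level set of a lower semicontinuous functional. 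For $\omega\in\Omega\setminus N_0$ the path $Y_{[0,t]}(\omega)$ lies in $\mathcal{H}_\gamma$, and by the very definition of $\mathring{\mathcal{J}}_j^c$ on $\mathcal{H}_\gamma\times\mathring{\Omega}$ adopted at the end of the proof of Lemma~\ref{lem:int_version} we have $\check{J}(\omega,\mathring{\omega})=\mathring{\mathcal{J}}_j^c(Y_{[0,t]}(\omega))(\mathring{\omega})$ for every $\mathring{\omega}\in\mathring{\Omega}$, which is exactly the required identification. The main technical point is the verification that $\mathcal{H}_\gamma$ is Borel, ensuring $N_0\in\mathcal{F}$; beyond that, the proof is a clean Fubini-style composition of measurable maps, the analytic content having already been discharged in Lemmas~\ref{lem:int_version} and~\ref{lem:H13}.
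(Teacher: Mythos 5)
Your proposal is correct and follows the same route as the paper's own proof: define $N_0=\{\omega\in\Omega : Y_{[0,t]}(\omega)\notin\mathcal{H}_\gamma\}$, invoke Lemma~\ref{lem:H13} to get $\tilde{P}(N_0)=0$, and identify the map on $(\Omega\setminus N_0)\times\mathring{\Omega}$ with the composition of the jointly measurable limsup map from Lemma~\ref{lem:int_version} with the path-valued random variable $\omega\mapsto Y_{[0,t]}(\omega)$. The only difference is one of detail: the paper asserts that $N_0$ is ``clearly'' in $\mathcal{F}$ and that the claim ``follows from the definition and measurability'' of the composed map, whereas you explicitly verify that $\mathcal{H}_\gamma$ is Borel and carry out the composition argument.
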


\begin{proof}
Notice first that
the set
\begin{equation*}
N_0 = \lbrace \omega \in \Omega \colon Y_{[0,t]}(\omega)
\notin \mathcal{H}_\gamma \rbrace
\end{equation*}
is clearly a member of $\mathcal{F}$ and we have that $\tilde{P}(N_0)=0$.
With $N_0$ so defined, the lemma follows from the definition and
measurability of $ (\omega,\mathring{\omega}) \mapsto
\mathring{\mathcal{J}}_j^c(Y_{[0,t]}(\omega))(\mathring{\omega})$.
\end{proof}

\begin{lemma}
\label{lem:integral_representation}
Let $c$ be a positive integer and $j\in \{0,\ldots,n\}$. Then we have
$\check{P}$-almost surely that
\begin{equation*}
\int_{t_j}^{t_{j+1}} \langle Y_s, \,\mathrm{d}\mathring{\eta}_j^{c,c+1}(s)
\rangle
=
\mathring{\mathcal{J}_j^c}(Y_{[0,t]}).
\end{equation*}
\end{lemma}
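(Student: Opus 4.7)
The strategy is a Fubini-type decoupling argument that exploits the product structure of $(\check{\Omega}, \check{\mathcal{F}}, \check{P}) = (\Omega \times \mathring{\Omega}, \mathcal{F} \otimes \mathring{\mathcal{F}}, \tilde{P} \otimes \mathring{P})$. Under $\check{P}$, the observation $Y$ (living on $\Omega$) and the semimartingale $\mathring{\eta}_j^{c,c+1}$ (living on $\mathring{\Omega}$) are independent. Combined with Lemma~\ref{lem:H13}, which guarantees that $\tilde{P}$-almost surely $Y_{[0,t]}$ belongs to $\mathcal{H}_\gamma$, this reduces the claim to a fiberwise application of Lemma~\ref{lem:int_version} with a deterministic H\"older path, followed by integration over $\omega$.

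More concretely, let $N_0$ be the $\tilde{P}$-null set from Lemma~\ref{lem:null_set}. For every $\omega \in \Omega \setminus N_0$, Lemma~\ref{lem:int_version} applied with the fixed deterministic path $y = Y_{[0,t]}(\omega) \in \mathcal{H}_\gamma$ yields that $\mathring{P}$-almost surely,
\[
\mathring{\mathcal{J}}_j^c(Y_{[0,t]}(\omega))(\mathring{\omega})
=
\int_{t_j}^{t_{j+1}} \langle Y_s(\omega), \mathrm{d}\mathring{\eta}_j^{c,c+1}(s,\mathring{\omega}) \rangle,
\]
where the right-hand side is the ordinary It\^o integral on $\mathring{\Omega}$ against the semimartingale $\mathring{\eta}_j^{c,c+1}(\cdot,\mathring{\omega})$. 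An application of Fubini's theorem then transfers this slicewise almost-sure identity to a $\check{P}$-almost sure identity on the product space $\check{\Omega}$.

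The one nontrivial point is that the integral on the left-hand side of the lemma's statement, which is defined intrinsically on $\check{\Omega}$ with respect to a filtration such as $\check{\mathcal{F}}_s = \mathcal{Y}_t \vee \mathring{\mathcal{F}}_s$ (suitably augmented, so that $Y_s$ is adapted and $\mathring{\eta}_j^{c,c+1}$ remains a semimartingale by independence of $Y$ and the Brownian motion driving $\mathring{\eta}$), agrees $\check{P}$-almost surely with the fiberwise integral $\omega \mapsto \int_{t_j}^{t_{j+1}} \langle Y_s(\omega), \mathrm{d}\mathring{\eta}_j^{c,c+1}(s) \rangle$ used above. This decoupling can be established by repeating the $L^2$-estimate from the proof of Lemma~\ref{lem:int_version} on the product space: the common Riemann-sum approximations $\mathcal{J}_j^{c,2^l}(Y_{[0,t]})$ converge to both integrals in $L^2(\check{P})$, with a bound proportional to the squared $\gamma$-H\"older seminorm of $Y_{[0,t]}$, which has finite expectation for $\gamma < 1/2$ by Kolmogorov's continuity criterion applied to Brownian motion. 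Dominated convergence together with the slicewise convergence then yields the identification, and the lemma follows by uniqueness of $L^2$-limits up to a subsequence.
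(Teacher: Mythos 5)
Your argument is correct and follows essentially the same route as the paper: decouple via Fubini on the product space, identify the product-space stochastic integral and the jointly measurable version $\mathring{\mathcal{J}}_j^c$ through their common Riemann-sum approximations (the genuinely nontrivial point, which you correctly isolate), and conclude by dominated convergence. The only real difference is the choice of dominating function --- the paper truncates $Y$ at level $K$ via a stopping time to get the uniform bound $4K^2\mathring{\mathrm{E}}[\mathring{\eta}_t^2]$ and then lets $K\to\infty$, whereas you dominate directly by the squared $\gamma$-H\"older seminorm of $Y_{[0,t]}$, which is integrable --- and both devices work.
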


\begin{proof}
Note that we can assume without loss of generality that $d_Y=1$ because the
result follows componentwise.
Then, let $K>0$ and $T = \inf\{s\in[0,t]\colon \lvert Y_s \rvert \leq K\}$
to define
\begin{equation*}
Y_s^K = Y_s \mathbb{I}_{s\leq T} + Y_T \mathbb{I}_{s> T}\; ;\qquad s\in[0,t].
\end{equation*}
Then Fubini's theorem and Lemma~\ref{lem:null_set} imply that
\begin{multline*}
\check{\mathrm{E}}
\Biggl[
\biggl(
\sum_{i=0}^{k-1}
    Y^K_{s_{i,j}}
    \Bigl(
    \mathring{\eta}_j^{c,c+1}(s_{i+1,j})
    -
    \mathring{\eta}_j^{c,c+1}(s_{i,j})
    \Bigr)
    -
    \mathring{\mathcal{J}_j^c}(Y^K_{[0,t]}(\omega))
\biggr)^2
\Biggr]\\
=
\int_{\Omega\setminus N_0}
\mathring{\mathrm{E}}
\bigl[
    \bigl(
    \mathring{\mathcal{J}}_j^{c,k}(Y^K_{[0,t]}(\omega))
    -
    \mathring{\mathcal{J}_j^c}(Y^K_{[0,t]}(\omega))
    \bigr)^2
\bigr]
\,\mathrm{d}\tilde{P}(\omega)
\end{multline*}
Now, since the function $s\mapsto Y^K_s(\omega)$ is continuous and
$\mathring{\mathcal{J}_j^c}(Y^K_{[0,t]}(\omega))$ is a version of the integral
$\int_{t_j}^{t_{j+1}}  Y^K_s(\omega) \,\mathrm{d}\mathring{\eta}_j^{c,c+1}(s)$
we have for every $\omega \in \Omega\setminus N_0$ that
\begin{equation*}
    \lim_{k\rightarrow \infty}
    \mathring{\mathrm{E}}
\bigl[
    \bigl(
    \mathring{\mathcal{J}}_j^{c,k}(Y^K_{[0,t]}(\omega))
    -
    \mathring{\mathcal{J}_j^c}(Y^K_{[0,t]}(\omega))
    \bigr)^2
\bigr]
= 0.
\end{equation*}
Moreover, clearly,
\begin{equation*}
    \mathring{\mathrm{E}}
\bigl[
    \bigl(
    \mathring{\mathcal{J}}_j^{c,k}(Y^K_{[0,t]}(\omega))
    -
    \mathring{\mathcal{J}_j^c}(Y^K_{[0,t]}(\omega))
    \bigr)^2
\bigr]
\leq
4K^2  \mathring{\mathrm{E}}[\mathring{\eta}_t^2] < \infty
\end{equation*}
So that we can conclude by the dominated convergence theorem that
\begin{multline*}
\lim_{k \rightarrow \infty}
\check{\mathrm{E}}
\Biggl[
\biggl(
\sum_{i=0}^{k-1}
    Y^K_{s_{i,j}}
    \Bigl(
    \eta_j^{c,c+1}(s_{i+1,j})
    -
    \eta_j^{c,c+1}(s_{i,j})
    \Bigr)
    -
    \mathring{\mathcal{J}_j^c}(Y^K_{[0,t]}(\omega))
\biggr)^2
\Biggr]\\
=
\int_{\Omega\setminus N_0}
\lim_{k\rightarrow\infty}
\mathring{\mathrm{E}}
\bigl[
    \bigl(
    \mathring{\mathcal{J}}_j^{c,k}(Y^K_{[0,t]}(\omega))
    -
    \mathring{\mathcal{J}_j^c}(Y^K_{[0,t]}(\omega))
    \bigr)^2
\bigr]
\,\mathrm{d}\tilde{P}(\omega) =0
\end{multline*}
As $K$ is arbitrary, the lemma is proved.
\end{proof}
Finally, we are ready to show the main result, Theorem~\ref{thm:robust}.
We restate it here again, in a slightly different manner which reflects the
current line of argument.

\begin{theorem}
\label{thm:final}
The random variable ${F}_{\varphi}^{\tau,m}(Y_{[0,t]})$ is a version of
$\pi_t^{\tau,m}(\varphi)$.
\end{theorem}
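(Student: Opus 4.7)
The plan is to identify $F^{\tau,m}_{\varphi}(Y_{[0,t]})$ with $\pi^{\tau,m}_t(\varphi)$ in two stages. First, I would establish a pathwise integration-by-parts identity showing that the discretised likelihood satisfies $Z^{\tau,m}_t = \mathcal{Z}^{\tau,m}_t(Y_{[0,t]})$ almost surely. Second, I would exploit the independence of $(X,V)$ from $Y$ under $\tilde{P}$, which is made transparent on the product space $\check\Omega$ introduced above, to write the conditional expectation $\rho^{\tau,m}_t(\varphi) = \tilde{\mathrm{E}}[\varphi(X_t) Z^{\tau,m}_t \mid \mathcal{Y}_t]$ as $G^{\tau,m}_{\varphi}(Y_{[0,t]})$ via Fubini. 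Taking the ratio with the denominator (the same identity for $\varphi = \mathbf{1}$) then yields the theorem.

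For the integration-by-parts stage, I would use that each $s \mapsto \eta^{l,m}_j(s)$ is adapted to $\{\mathcal{F}^{0,V}_s\}$ and that, under $\tilde{P}$, $Y$ is a Brownian motion independent of $V$ and of $X_0$, so $\eta^{l,m}_j$ and $Y$ are semimartingales on $\{\mathcal{H}^t_s\}$ with vanishing cross-variation. Classical Itô integration by parts then yields, almost surely,
\begin{equation*}
\int_{t_j}^{t_{j+1}} \langle \eta^{l,m}_j(s), \mathrm{d}Y_s\rangle = \langle \eta^{l,m}_j(t_{j+1}), Y_{t_{j+1}}\rangle - \langle \eta^{l,m}_j(t_j), Y_{t_j}\rangle - \int_{t_j}^{t_{j+1}} \langle Y_s, \mathrm{d}\eta^{l,m}_j(s)\rangle.
\end{equation*}
Since $\eta^{0,m}_j(t_j) = h(X_{t_j})$ and $\eta^{2,m}_j(t_j) = 0$ (every multi-index in $\mathcal{M}_{2,m-1}$ has length at least two, so the associated iterated integral vanishes at $t_j$), summing over $j$ identifies $\xi^{\tau,m}_t = \Xi^{\tau,m}_t(Y_{[0,t]})$ and, for $m>2$, $\mu^{\tau,m}(j) = M^{\tau,m}_j(Y_{[0,t]})$. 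Composing with the exponential and, where necessary, with the truncation function $\Gamma_{m,\delta}$, gives $Z^{\tau,m}_t = \mathcal{Z}^{\tau,m}_t(Y_{[0,t]})$ almost surely, with Lemma~\ref{lem:integral_representation} providing the bridge between the stochastic integral $\int Y\,\mathrm{d}\eta$ and the jointly measurable pathwise object $\mathcal{J}^c_j(Y_{[0,t]})$ that enters $\Xi^{\tau,m}_t(Y_{[0,t]})$.

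For the second stage I would work on $\check\Omega = \Omega\times\mathring\Omega$. Because $Y$ (lifted from $\Omega$) and $(\mathring X,\mathring V)$ are independent under $\check P$, the standard freezing-of-independent-variables identity combined with Lemma~\ref{lem:null_set} (which ensures the joint $\check{\mathcal{F}}$-measurability needed for Fubini) gives, for $\tilde P$-a.e.\ $\omega$,
\begin{equation*}
\tilde{\mathrm{E}}[\varphi(X_t) Z^{\tau,m}_t \mid \mathcal{Y}_t](\omega) = \mathring{\mathrm{E}}[\varphi(\mathring X_t)\, \mathring{\mathcal{Z}}^{\tau,m}_t(Y_{[0,t]}(\omega))] = \mathring G^{\tau,m}_{\varphi}(Y_{[0,t]}(\omega)) = G^{\tau,m}_{\varphi}(Y_{[0,t]}(\omega)),
\end{equation*}
and the same identity with $\varphi$ replaced by $\mathbf{1}$ covers the denominator. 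Dividing then delivers $\pi^{\tau,m}_t(\varphi) = F^{\tau,m}_{\varphi}(Y_{[0,t]})$ almost surely. The main technical obstacle is precisely the measurability and versioning question that arises when plugging the random path $Y_{[0,t]}$ into the expression $\mathcal{Z}^{\tau,m}_t(y)$, which is only defined $y$-by-$y$ up to a null set; this is exactly what Lemmas~\ref{lem:int_version}--\ref{lem:integral_representation} are built to resolve, after which the theorem reduces to the integration-by-parts identity, Fubini, and the Kallianpur--Striebel ratio.
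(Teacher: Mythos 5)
Your proposal is correct and follows essentially the same route as the paper: integration by parts to identify $\bar{\xi}_t^{\tau,m}$ with $\bar{\Xi}_t^{\tau,m}(Y_{[0,t]})$, the product space $\check{\Omega}$ with Fubini and Lemmas~\ref{lem:null_set}--\ref{lem:integral_representation} to handle the measurability of the frozen stochastic integral, and the Kallianpur--Striebel ratio to conclude. The only cosmetic difference is that the paper verifies the identity $\rho_t^{\tau,m}(\varphi)=G_\varphi^{\tau,m}(Y_{[0,t]})$ by testing against bounded continuous functionals $b(Y_{[0,t]})$ and performs the integration by parts after lifting to $\check{\Omega}$, whereas you invoke the freezing-of-independent-variables identity directly and integrate by parts on the original space first.
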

\begin{proof}
By the Kallianpur-Striebel formula it suffices to show that for all bounded and
Borel measurable functions $\varphi$ we have
$\tilde{P}$-almost surely
\begin{equation*}
    \rho_t^{\tau,m}(\varphi) = {G}_{\varphi}^{\tau,m}(Y_{[0,t]}).
\end{equation*}
Furthermore, this is equivalent to showing that for all continuous and bounded
functions $b\colon C([0,t];\mathrm{R}^{d_Y}) \to \mathrm{R}$ the equality
\begin{equation*}
    \tilde{\mathrm{E}}[\rho_t^{\tau,m}(\varphi)b(Y_{[0,t]})]
    =
    \tilde{\mathrm{E}}[{G}_{\varphi}^{\tau,m}(Y_{[0,t]})
        b(Y_{[0,t]})].
\end{equation*}
holds. As for the left-hand side we can write
\begin{align*}
    &\tilde{\mathrm{E}}[\rho_t^{\tau,m}(\varphi)b(Y_{[0,t]})]\\
    &=
    \tilde{\mathrm{E}}[\varphi(X_t)Z_t^{\tau,m}b(Y_{[0,t]})]\\
    &=
    \tilde{\mathrm{E}}[\varphi(X_t)\exp(\bar{\xi}_t^{\tau,m})b(Y_{[0,t]})]\\
    &=
    \check{\mathrm{E}}[\varphi(\mathring{X}_t)
    \exp(\mathring{\bar{\xi}}_t^{\tau,m})b(Y_{[0,t]})]\\
    &=
    \check{\mathrm{E}}[\varphi(\mathring{X}_t)
    \exp(\mathtt{IBP}(\mathring{\bar{\xi}}_t^{\tau,m}))b(Y_{[0,t]})]\\
\end{align*}
where $\mathtt{IBP}(\mathring{\bar{\xi}}_t^{\tau,m})$ is given by the
application of the integration by parts formula for semimartingales as
\begin{align*}
    \mathtt{IBP}(\mathring{\xi}_{t}^{\tau,m}) & =
        \sum_{j=0}^{n-1}\mathtt{IBP}(\mathring{\xi}_{t}^{\tau,m})(j)\\
        &=
        \sum_{j=0}^{n-1}
    \bigl\lbrace
    \mathring{\kappa}_j^{0,m}
    +
     \big\langle  \mathring{\eta}_j^{0,m}(t_{j+1}) ,{Y}_{t_{j+1}}  \big\rangle
     -
      \big\langle h(\mathring{X}_{t_{j}}) ,{Y}_{t_{j}}  \big\rangle
      -
    \int_{t_{j}}^{t_{j+1}}
        \big\langle {Y}_s , \mathrm{d}\mathring{\eta}_j^{0,m}(s) \big\rangle
        \bigr\rbrace\\
%%%
    \mathtt{IBP}(\mathring{\mu}^{\tau,m})\left(j\right)
    &=
    \mathring{\kappa}_j^{2,m}
    +
    \big\langle  \mathring{\eta}_j^{2,m}(t_{j+1}) ,{Y}_{t_{j+1}}  \big\rangle
    -
    \int_{t_{j}}^{t_{j+1}}
    \big\langle {Y}_s , \mathrm{d}\mathring{\eta}_j^{2,m}(s) \big\rangle\\
    \mathtt{IBP}(\mathring{\bar{\xi}}_{t}^{\tau,m})\left(j\right)&=
    \begin{cases}
        \displaystyle \mathtt{IBP}(\mathring{\xi}_{t}^{\tau,m})(j), &
        \mathrm{if}\ \ m=1,2\\
        \displaystyle \mathtt{IBP}(\mathring{\xi}_{t}^{\tau,m})(j)+
        \Gamma_{m,(t_{j+1}-t_j)}
            \left(\mathtt{IBP}(\mathring{\mu}^{\tau,m})\left(j\right)\right), &
            \mathrm{if}\ \ m>2
    \end{cases}\;.
\end{align*}
And, on the other hand, the right-hand side is
\begin{align*}
    &\tilde{\mathrm{E}}[{G}_{\varphi}^{\tau,m}(Y_{[0,t]})
        b(Y_{[0,t]})]\\
    &=
    \tilde{\mathrm{E}}[\varphi(X_t)\mathcal{Z}_t^{\tau,m}(Y_{[0,t]})
        b(Y_{[0,t]})]\\
    &=
    \tilde{\mathrm{E}}[\varphi(X_t)\exp(\bar{\Xi}_t^{\tau,m}(Y_{[0,t]}))
        b(Y_{[0,t]})]\\
    &=
    \tilde{\mathrm{E}}[\mathring{\mathrm{E}}[\varphi(\mathring{X}_t)
    \exp(\mathring{\bar{\Xi}}_t^{\tau,m}(Y_{[0,t]}))]
        b(Y_{[0,t]})]\\
    &=
    \check{\mathrm{E}}[\varphi(\mathring{X}_t)
    \exp(\mathring{\bar{\Xi}}_t^{\tau,m}(Y_{[0,t]}))
        b(Y_{[0,t]})],
\end{align*}
where the last equality follows from Fubini's theorem.
As the representations coincide, the theorem is thus proved.
\end{proof}

\newpage

\end{document}